\definecolor{labelkey}{rgb}{0,0.08,0.45}
\definecolor{refkey}{rgb}{0,0.6,0.0}
\definecolor{Brown}{rgb}{0.45,0.0,0.05}
\definecolor{dgreen}{rgb}{0.00,0.49,0.00}
\definecolor{dblue}{rgb}{0,0.08,0.75}
\title{\sffamily A projected primal-dual 
splitting for solving constrained monotone
inclusions
\footnote{Contact author: 
L. M. Brice\~no-Arias, {\ttfamily luis.briceno@usm.cl},
}}
\author{Luis M. Brice\~{n}o-Arias and Sergio L\'opez Rivera$^1$ 
\\[5mm]
\small $\!^1$Universidad T\'ecnica Federico Santa Mar\'ia\\
\small Departamento de Matem\'atica\\
}
\newcommand{\Scal}[2]{{\bigg\langle{{#1}\:\bigg |~{#2}}\bigg\rangle}}
\newcommand{\scal}[2]{{\left\langle{{#1}\mid{#2}}\right\rangle}}
\newcommand{\Menge}[2]{\Big\{{#1}~\big |~{#2}\Big\}}
\newcommand{\HH}{\ensuremath{{\mathcal H}}}
\newcommand{\GG}{\ensuremath{{\mathcal G}}}
\newcommand{\Id}{\ensuremath{\operatorname{Id}}\,}
\newcommand{\RR}{\ensuremath{\mathbb{R}}}
\newcommand{\ran}{\ensuremath{\operatorname{ran}}}
\newcommand{\RP}{\ensuremath{\left[0,+\infty\right[}}
\newcommand{\RPX}{\ensuremath{\left[0,+\infty\right]}}
\newcommand{\NN}{\ensuremath{\mathbb N}}
\newcommand{\exi}{\ensuremath{\exists\,}}
\newcommand{\weakly}{\ensuremath{\:\rightharpoonup\:}}
\newcommand{\argmin}{\ensuremath{\operatorname{argmin}}}
\newcommand{\Fix}{\ensuremath{\operatorname{Fix}}}
\newcommand{\dom}{\ensuremath{\operatorname{dom}}}
\newcommand{\prox}{\ensuremath{\operatorname{prox}}}
\newcommand{\infconv}{\ensuremath{\mbox{\small$\,\square\,$}}}
\newtheorem{theorem}{Theorem}[section]
\newtheorem{corollary}[theorem]{Corollary}
\theoremstyle{plain}{\theorembodyfont{\rmfamily}%
}
\theoremstyle{plain}{\theorembodyfont{\rmfamily}%
}
\theoremstyle{plain}{\theorembodyfont{\rmfamily}%
}
\theoremstyle{plain}{\theorembodyfont{\rmfamily}%
\newtheorem{remark}[theorem]{Remark}}
\theoremstyle{plain}{\theorembodyfont{\rmfamily}%
}
\theoremstyle{plain}{\theorembodyfont{\rmfamily}%
\newtheorem{problem}[theorem]{Problem}}
\numberwithin{equation}{section}
\begin{document}
\maketitle

\begin{abstract}
In this paper we provide an algorithm for solving constrained composite 
primal-dual monotone inclusions, i.e., monotone inclusions in which a 
priori information on primal-dual solutions is represented via closed convex sets.
The proposed algorithm incorporates a projection step onto the a priori information sets 
and generalizes the method proposed in \cite{2.vu}. Moreover, under the presence of 
strong monotonicity, we derive an accelerated scheme inspired on \cite{3.CP} applied 
to the more general context of constrained monotone inclusions. In the particular case of 
convex 
optimization, our algorithm generalizes the methods proposed in 
\cite{1.condat,3.CP} allowing a priori information on solutions and we provide an accelerated 
scheme under strong convexity. An application of our approach with a priori information
is constrained convex optimization problems, in which available 
primal-dual methods impose constraints via Lagrange multiplier updates, usually 
leading to slow algorithms with unfeasible primal iterates. The proposed modification forces 
primal iterates to satisfy a selection of constraints onto which we can project, obtaining a 
faster method as numerical examples exhibit. The obtained results 
extend and improve several results in \cite{1.condat,2.vu,3.CP}.
\end{abstract}

{\bfseries Keywords:} accelerated schemes, constrained convex optimization, monotone 
operator theory,  proximity operator, splitting algorithms
\newpage
\section{Introduction}

This paper is devoted to the numerical resolution of composite primal-dual monotone 
inclusions in which a priori 
information on solutions is known. The relevance of monotone inclusions and convex 
optimization is justified via the increasing number of applications in several fields of
engineering and applied mathematics as image processing, evolution inclusions, 
variational inequalities, learning, partial differential equations, Mean Field Games, among 
others (see, e.g.,
\cite{4.Sicon1,5.Atto08,6.Atto17,7.BAKS,8.Facc03,9.Gabay83,10.Mercier79} and references 
therein).
The a priori information on primal-dual solutions is represented via closed convex 
sets in primal and dual spaces, following some ideas developed in 
\cite{11.Tsen00,12.Siopt2}. We force primal-dual iterates to belong to these information sets 
by adding additional projections on primal-dual iterates in each iteration of our proposed 
method. 

An important instance, in which the advantage of our formulation arises, is 
composite convex optimization with affine linear equality constraints. In this context, the 
primal-dual methods proposed in 
\cite{1.condat,2.vu,3.CP,13.Esser10,13.HeYuan,13.Cpock16,13.LorPock15} impose feasibility 
through Lagrange multiplier updates. A 
disadvantage of this approach is that such algorithms are usually slow and their primal 
iterates do not necessarily satisfy any of the constraints (see, e.g., \cite{7.BAKS}), leading to 
unfeasible approximate primal solutions. By projecting onto the affine subspace generated 
by the constraints, previous problem is solved. However, in several applications this 
projection
is not easy to compute because of singularity or bad conditioning on the linear system 
(see, e.g. \cite{13.CEMRACS}). 
In this context, the a priori information on primal solutions can be set as
any selection of the affine linear constraints. Indeed, since any solution is feasible, we know
it must satisfy any selection of the constraints. 
Even if in the previous context the
formulation with a priori information may be seen as artificial, in a practical point of view
this formulation allows us to propose a method with an additional projection onto an 
arbitrary selection of the constraints, which improves the efficiency of the method (see 
Section~\ref{sec:numeric}). This method forces primal iterates to satisfy 
the selection of the constraints, which can be chosen in order to compute the projection 
easily.

In this paper we provide a new projected primal-dual splitting method for solving 
constrained monotone inclusions, i.e., inclusions in which we count on a priori information 
on primal-dual solutions. We also provide an accelerated scheme of our method in the 
presence of strong monotonicity and we derive linear convergence in the fully strongly 
monotone case. In the case without a priori information, our results give an accelerated 
scheme of the method proposed in \cite{2.vu} for strongly monotone inclusions.
In the context of convex optimization, our method generalize the 
algorithms proposed in \cite{1.condat,3.CP} and \cite{13.LorPock15} without inertia, by 
incorporating a projection onto an the a priori primal-dual information set. This method is 
applied in the context of convex optimization 
with equality constraints, when the a priori information set is chosen as a selection of the 
affine linear constraints in which it is easy to project. The advantages of this approach with 
respect to classical primal-dual approaches are justified via 
numerical examples. Our acceleration scheme in the convex optimization context is 
obtained as a generalization of \cite{3.CP}, complementing the ergodic rates obtained in the 
case without projection in \cite{13.Cpock16} and, as far as we 
know, have not been developed in the literature and are interesting in their own right.

The paper is organized as follows. In Section~\ref{sec2} we set our notation and we give a 
brief background. In section \ref{sec3}, we set the constrained primal-dual monotone 
inclusion and we propose our algorithm 
together with the main results. We also provide connections with existing methods in the 
literature. In Section~\ref{sec6}, we apply previous 
results to convex optimization problems with 
equality affine linear constraints, together with numerical experiences 
illustrating the improvement in the efficiency of the algorithm with the additional projection.
We finish with some conclusions in Section~\ref{sec5}.

\section{Notation and preliminaries}
\label{sec2}
Let $\cal{H}$ and ${\cal{G}}$ be real Hilbert spaces. We denote the scalar products of 
${\cal{H}}$ and $\cal{G}$ by $\scal{\cdot}{\cdot}$ and the associated
norms by $\|\cdot\|$. The
projector operator onto a nonempty closed convex set $C\subset {\cal{H}}$ is denoted by 
$P_{C}$ and, for a set-valued operator $M: {\cal{H}} \rightarrow 2^{{\cal{H}}}$ we use 
$\mbox{ran}(M)$ for the range 
of $M$, $\mbox{gra}(M)$ for its graph, $M^{-1}$ for its inverse, $J_{M}=(\Id+M)^{-1}$
for its resolvent, and $\infconv$ stands for the parallel sum 
as in \cite{14.Livre1}. Moreover, $M$ is $\rho$-strongly monotone if, for 
every $(x,u)$ and $(y,v)$ in
$\mbox{gra}(M)$, $\left\langle x-y, u-v\right\rangle \geq \rho\left\|x-y\right\|^{2},$
it is $\rho-$cocoercive if $M^{-1}$ is $\rho-$strongly monotone,
$M$ is monotone if it is $\rho$-strongly monotone with $\rho=0$,
and it is maximally monotone if its graph is maximal in the sens of inclusions in 
$\HH\times\HH $, among the graphs of monotone operators. 
The class of all lower semicontinuous convex functions $f : {\cal{H}} \rightarrow (-\infty, 
+\infty]$
such that $\dom(f)=\left\{x\in {\cal{H}} \,|\, f(x)<+\infty \right\} \neq \varnothing$ is denoted 
by $\Gamma_{0}({\cal{H}})$ and, for every $f\in \Gamma_{0}({\cal{H}})$, the Fenchel 
conjugate of $f$ is denoted by $f^{*}$, its subdifferential by $\partial f$, and its 
proximity operator by $\prox_f$, as in \cite{14.Livre1}. 
We recall that $(\partial f)^{-1}=\partial f^{*}$ and $J_{\partial f} = \prox_{f}$. In addition, 
when 
$C\subset {\cal{H}}$ is a convex closed subset, we have that $J_{\partial 
	\iota_{C}}=\prox_{\iota_C}=P_{C}$, where $\iota_C$ is the indicator function of $C$, which 
	is 
$0$ in $C$ and $+\infty$ otherwise. Given $\alpha\in]0,1[$, an operator 
$T\colon\HH\to\HH$ satisfying $\Fix T\neq\varnothing$ is $\alpha-$averaged 
quasi-nonexpansive if, for every $x\in\HH$ and $y\in\Fix T$ we have $\|Tx-y\|^2\le 
\|x-y\|^2-(\frac{1-\alpha}{\alpha})\|x-Tx\|^2$.
We refer the reader to \cite{14.Livre1} for definitions and further results 
in monotone operator theory and convex optimization.
\section{Problem and main results}
\label{sec3}
We consider the following problem.
\begin{problem}
	\label{prob:main}
	Let $T\colon\HH\to\HH$ be an $\alpha-$averaged quasi-nonexpansive operator with 
	$\alpha\in]0,1[$, 
	let $V$ be a closed vector subspace of $\GG$, let $L\colon\HH\to\GG$ be a nonzero 
	linear 
	bounded 
	operator satisfying $\ran L\subset V$, let $A:{\cal{H}}\rightarrow 2^{\cal{H}}$ and
	$D\colon{\GG}\rightarrow 2^{\GG}$ 
	be maximally monotone operators which are $\rho$ and 
	$\delta-$strongly 
	monotone, respectively, and let $B:{\GG}\rightarrow 2^{\GG}$ and $C\colon\HH\to\HH$ 
	be $\chi$ and $\beta-$cocoercive, 
	respectively,
	for $(\rho,\chi)\in\RP^2$ and  $(\delta,\beta)\in\RPX^2$. 
	The problem is to solve the primal and dual inclusions
	\begin{align}\label{inc:primal}\tag{$\mathcal{P}$}
		&\text{find }\quad \hat{x}\in\Fix T\quad\text{such that}\quad 0\in A\hat{x}+
		L^*(B\infconv D)(L\hat{x})+C\hat{x}\\
		\label{inc:dual}\tag{$\mathcal{D}$}
		&\text{find }\quad \hat{u}\in V\quad\text{such that}\quad \left(\exi\hat{x}\in 
		\Fix T\right)\quad \begin{cases}
			-L^*\hat{u}\in A\hat{x}+C\hat{x}\\
			\hat{u}\in (B\infconv D)(L\hat{x}),
		\end{cases}
	\end{align}
	under the assumption that solutions exist. 
\end{problem}
When $A=\partial f$, $B=\partial g$, $C=\nabla h$, and $D=\partial \ell$, where 
$f\in\Gamma_0(\HH)$, $g\in\Gamma_0(\GG)$, 
$h\colon\HH\to\RR$ is a differentiable 
convex function with $\beta^{-1}-$Lipschitz gradient, and $\ell\in\Gamma_0(\GG)$ is 
$\delta-$strongly convex,
Problem~\ref{prob:main} reduces to
\begin{equation}
	\label{e:primal}\tag{$\mathcal{P}_0$}
	\text{find}\quad \hat{x}\in\Fix T\cap\argmin_{x\in 
	{\cal{H}}}F(x):=f(x)+(g\infconv\ell)(Lx)+h(x)
\end{equation}
together with the dual problem
\begin{equation}
	\label{e:dual}\tag{$\mathcal{D}_0$}
	\text{find}\quad \hat{u}\in V\cap\argmin_{u\in\GG}g^*(u)+(f^*\infconv h^*)(-L^*u)+\ell^*(u),
\end{equation} 
assuming that some qualification condition holds.
Note that, when $T=P_X$, any solution to \eqref{e:primal} 
is a solution to $\min_{x\in X}F(x)$, but
the converse is not true. The set $X$ in this case represents an a priori information on the 
primal solution. As you can see in the next section, an application of this formulation 
is constrained convex optimization, in which $X$ may represent a selection of the affine 
linear constraints. Even if, in this case, the formulation can be set without considering the 
set $X$, its artificial appearance has a practical relevance: the method obtained include a 
projection onto $X$ which helps to the performance of the method as stated in 
Section~\ref{sec6}. 

When $\rho=\chi=0$, $V=\GG$ and $T= \Id$, 
\eqref{e:primal}-\eqref{e:dual} 
can be solved by using \cite[Theorem~4.2]{16.CombPes12} or 
\cite[Theorem~5]{13.LorPock15}. In the last method, inertial 
terms are also included.
In the case when $\ell^*=0$ the algorithm in 
\cite{1.condat} can be used and if $\ell^*=h=0$, \eqref{e:primal}-\eqref{e:dual} can be 
solved by \cite{3.CP,15.Siopt1} or a version of \cite{3.CP} with linesearch proposed in 
\cite{19.MaliP18}. In \cite{3.CP}, the strong convexity is exploited via acceleration schemes.
Moreover, when $T=P_X$, $X\subset\HH$ is nonempty, closed and convex, $V=\GG$ and 
$\ell^*=h=0$,
\eqref{e:primal}-\eqref{e:dual} is solved in \cite[Theorem~3.1]{7.BAKS}. 
When $\rho>0$ or $\chi>0$, ergodic 
convergence rates are derived in \cite{13.Cpock16} when $V=\GG$ and $T= \Id$.
In its whole generality, as far as we know, \eqref{e:primal}-\eqref{e:dual} has not been 
solved and strong convexity has not been exploited.

In Problem~\ref{prob:main} set $T=\Id$, 
$V=\GG=G_1\oplus\cdots\oplus G_m$, $L\colon 
x\mapsto(L_1x,\ldots,L_mx)$, $B\colon (u_1,\ldots,u_m)\mapsto \omega_1B_1u_1\times 
\cdots\times\omega_mB_mu_m$ and $D\colon (u_1,\ldots,u_m)\mapsto 
\omega_1D_1u_1\times \cdots\times\omega_mD_mu_m$, where for every $i\in\{1,\ldots,m\}$, 
$L_i\colon\HH\to G_i$ is linear and bounded, $B_i\colon G_i\mapsto 2^{G_i}$ and $D_i\colon 
G_i\mapsto 2^{G_i}$ are maximally monotone operators such that $D_i$ is strongly 
monotone, and $\omega_i>0$ satisfies $\sum_{i=1}^m\omega_i=1$. Then, 
Problem~\ref{prob:main} reduces to \cite[Problem~1.1]{16.CombPes12} (see also 
\cite[Problem~1.1]{2.vu}). We prefer to set $m=1$ for simplicity. In \cite{16.CombPes12}, 
previous problem is solved 
when $C$ is monotone and Lipschitz by applying the method in \cite{17.Tsen00} to the 
product primal-dual space. Accelerated 
versions of previous algorithm under strong monotonicity are proposed in \cite{18.bot14}. 
The cocoercivity of $C$ is exploited in \cite{2.vu}, where an algorithm is proposed for 
solving 
Problem~\ref{prob:main} when $\rho=\chi=0$, $V=\GG$ and $T= \Id$. 
In the following theorem we provide an algorithm for solving Problem~\ref{prob:main} in its 
whole generality with weak convergence to a solution when the stepsizes are fixed. 
Moreover, when $A$ or $B^{-1}$ are strongly monotone ($\rho>0$ or $\chi>0$), we provide 
an accelerated version inspired on (and generalizing) \cite[Section~5.1]{3.CP}. Finally, 
we generalize
\cite[Section~5.2]{3.CP} for obtaining linear convergence when $\rho>0$ and $\chi>0$.
\begin{theorem}
	\label{thm:main}
	Let $\gamma_{0}\in]0,2\delta[$ and $\tau_{0}\in]0,2\beta[$ be such that 
	\begin{equation}
		\label{e:parcond}
		\|L\|^2\le
		\left(\frac{1}{\tau_0}-\frac{1}{2\beta}\right)\left(\frac{1}{\gamma_0}-\frac{1}{2\delta}\right)
	\end{equation}
	and let $(x^0,\bar{x}^0,u^{0})\in \HH\times\HH\times\GG$ such that $\bar{x}^{0}=x^{0}$. 
	Let 
	$(\theta_k)_{k\in\NN}$, $(\gamma_k)_{k\in\NN}$ and $(\tau_k)_{k\in\NN}$ be sequences 
	in $]0,1]$, $]0,2\delta[$ and $]0,2\beta[$, respectively, and consider 
	\begin{equation}
		\label{e:alg2}
		(\forall k\in \mathbb{N})\quad 
		\left\lfloor 
		\begin{array}{ll}
			\eta^{k+1}=J_{\gamma_{k} B^{-1}}(u^k+\gamma_{k} (L\bar{x}^k-D^{-1}u^k))\\
			u^{k+1}=P_V\,\eta^{k+1}\\
			p^{k+1}=J_{\tau_{k} A}(x^k-\tau_{k} (L^*u^{k+1}+Cx^k))\\
			x^{k+1}=T\,p^{k+1}\\
			\bar{x}^{k+1}=x^{k+1}+\theta_{k}(p^{k+1}-x^{k}).
		\end{array}
		\right. 
	\end{equation}
	Then, the following hold.
	\begin{enumerate}
		\item\label{thm:maini} For every $k\in\NN$ and for every solution $(\hat{x},\hat{u})$ 
		to Problem~\ref{prob:main}, we have
		\begin{align}
			\label{e:ineqmain}
			\hspace{-.5cm} 
			\frac{\|x^k-\hat{x}\|^2}{\tau_{k}} +\frac{\|u^k-\hat{u}\|^2}{\gamma_{k}} ∞
			&\geq (2\rho\tau_k+1)\frac{\|p^{k+1}-\hat{x}\|^2}{\tau_k}
			+\left\|p^{k+1}-x^k\right\|^2\left(\frac{1}{\tau_{k}}-\frac{1}{2\beta}\right)\nonumber\\ 
			&+(2\chi\gamma_k+1)\frac{\|\eta^{k+1}-\hat{u}\|^2}{\gamma_k} + 
			\|\eta^{k+1}-u^k\|^2\left(\frac{1}{\gamma_{k}}-\frac{1}{2\delta}\right) \nonumber\\
			&	\hspace{-1.3cm}+2\scal{L(p^{k+1}-{x}^k)}{\eta^{k+1}-\hat{u}} 
			-2\theta_{k-1}\scal{L(p^k-x^{k-1})}{\eta^{k}-\hat{u}}		\nonumber\\
			&\hspace{-1.3cm}
			-2\theta_{k-1}\|L\|\|p^k-x^{k-1}\|\|\eta^{k+1}-u^k\|.
		\end{align}
		
		\item\label{thm:mainii} Suppose that $\rho=0$ and $\chi=0$. If we set 
		$\theta_k\equiv 
		1$, $\tau_k\equiv\tau$, $\gamma_k\equiv\gamma$ and we 
		assume that \eqref{e:parcond} holds with strict inequality, 
		we obtain $x^{k}\weakly\hat{x}$ and $u^{k}\weakly\hat{u}$, for some 
		solution $(\hat{x},\hat{u})$ to Problem~\ref{prob:main}.
		\item\label{thm:mainiii} Suppose that $\rho>0$, $\chi=0$, and $D^{-1}=0$. If we set
		\begin{equation}
			\label{e:defparam}
			(\forall k\in\NN)\quad \theta_k=\frac{1}{\sqrt{1+2\rho\tau_k}},\quad 
			\tau_{k+1}=\theta_{k} 
			\tau_{k},\quad \gamma_{k+1}={\gamma_{k}}/{\theta_{k}},
		\end{equation}
		and we 
		assume that \eqref{e:parcond} holds with equality, we obtain, for every solution 
		$(\hat{x},\hat{u})$ to Problem~\ref{prob:main},
		$(\forall \varepsilon>0)(\exists 
		N_{0}\in \mathbb{N})(\forall k\geq N_{0})$\
		\begin{equation} \big\|x^{k}-\hat{x}\big\|^{2}\leq 
			\dfrac{1+\varepsilon}{k^2}\left(\dfrac{\left\|x^{0}-\hat{x}\right\|^{2}}{\rho^2\tau_{0}^{2}}
			 + 
			\frac{2\beta\|L\|^{2}}{\rho^2(2\beta-{\tau_0})} 
			\left\|u^{0}-\hat{u}\right\|^{2}\right). 
		\end{equation} 
		\item\label{thm:mainiv} Suppose that
		$\rho>0$ and $\chi>0$ and define
		\begin{equation}
			\label{e:defmualpha}
			\mu = \frac{2\sqrt{\rho\chi}}{\left\|L\right\|}\quad \text{and}\quad 
			\alpha =\min\left\{\frac{\mu \rho}{\rho+\frac{\mu}{4\beta}} , \frac{\mu 
				\chi}{\chi+\frac{\mu}{4\delta}}\right\}.
		\end{equation}
		If we set $\theta_k\equiv\theta\in((1+\alpha)^{-1}, 1]$, $\tau_{k}\equiv\tau$ and 
		$\gamma_{k}\equiv\gamma$ with
		\begin{equation}
			\label{e:deftaugamma}
			\tau = \dfrac{2\beta\mu}{\mu + 
				4\beta\rho}
			\quad\text{and}\quad  \gamma = 
			\dfrac{2\mu\delta}{\mu+4\delta\chi}, 
		\end{equation} 
		we obtain linear convergence. That is, for every $k\in \mathbb{N}$,
		\begin{multline}
			\label{clec}
			\left(\chi(1-\omega) + \dfrac{\mu}{4\delta}\right)\left\|u^{k}-\hat{u}\right\|^{2} + 
			\left(\rho + 
			\dfrac{\mu}{4\beta}\right)\left\|x^{k}-\hat{x}\right\|^{2}\\
			\leq\omega^{k}\left(\left(\chi + \dfrac{\mu}{4\delta}\right)\left\|u^{0}-\hat{u}\right\|^{2} 
			+ 
			\left(\rho + \dfrac{\mu}{4\beta}\right)\left\|x^{0}-\hat{x}\right\|^{2}\right),
		\end{multline}
		where $\omega =(1+\theta)/(2+\alpha)\in [(1+\alpha)^{-1},\theta).$
	\end{enumerate}
\end{theorem}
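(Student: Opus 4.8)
The four assertions all rest on the single descent estimate \eqref{e:ineqmain} of \ref{thm:maini}, so the plan is to establish that inequality first and then specialize the parameters. To prove \ref{thm:maini}, I would start from the variational characterization of the two resolvent steps in \eqref{e:alg2}. Writing $D^{-1}$ for the ($\delta$-cocoercive, hence single-valued) inverse of the $\delta$-strongly monotone $D$, the updates read
\[
\tfrac{1}{\gamma_k}(u^k-\eta^{k+1})+L\bar x^k-D^{-1}u^k\in B^{-1}\eta^{k+1},
\qquad
\tfrac{1}{\tau_k}(x^k-p^{k+1})-L^*u^{k+1}-Cx^k\in A p^{k+1},
\]
while a solution $(\hat x,\hat u)$ satisfies $L\hat x-D^{-1}\hat u\in B^{-1}\hat u$ and $-L^*\hat u-C\hat x\in A\hat x$, after rewriting $\hat u\in(B\infconv D)(L\hat x)$ through $(B\infconv D)^{-1}=B^{-1}+D^{-1}$. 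Testing $\rho$-strong monotonicity of $A$ at $(p^{k+1},\hat x)$ and $\chi$-strong monotonicity of $B^{-1}$ at $(\eta^{k+1},\hat u)$, then expanding the resolvent inner products with $2\scal{a-b}{c-a}=\|c-b\|^2-\|a-b\|^2-\|c-a\|^2$, produces the terms $(2\rho\tau_k+1)\|p^{k+1}-\hat x\|^2/\tau_k$ and $(2\chi\gamma_k+1)\|\eta^{k+1}-\hat u\|^2/\gamma_k$ together with $\|p^{k+1}-x^k\|^2/\tau_k$ and $\|\eta^{k+1}-u^k\|^2/\gamma_k$. The cocoercive parts are absorbed by splitting $\scal{p^{k+1}-\hat x}{Cx^k-C\hat x}=\scal{x^k-\hat x}{Cx^k-C\hat x}+\scal{p^{k+1}-x^k}{Cx^k-C\hat x}$ and applying $\beta$-cocoercivity plus Young's inequality, which costs exactly the $-\|p^{k+1}-x^k\|^2/(2\beta)$ correction (and symmetrically $-\|\eta^{k+1}-u^k\|^2/(2\delta)$ for $D^{-1}$). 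The coupling terms are where $\ran L\subset V$ and $u^{k+1}=P_V\eta^{k+1}$ enter: since $\eta^{k+1}-u^{k+1}\in V^\perp$ and $L(p^{k+1}-\hat x)\in V$, the primal coupling $\scal{L(p^{k+1}-\hat x)}{u^{k+1}-\hat u}$ equals $\scal{L(p^{k+1}-\hat x)}{\eta^{k+1}-\hat u}$, which together with the dual coupling $-\scal{\eta^{k+1}-\hat u}{L(x^k-\hat x)}$ collapses to $\scal{L(p^{k+1}-x^k)}{\eta^{k+1}-\hat u}$. Finally I would substitute $\bar x^k=x^k+\theta_{k-1}(p^k-x^{k-1})$ and, in the residual $-2\theta_{k-1}\scal{\eta^{k+1}-\hat u}{L(p^k-x^{k-1})}$, decompose $\eta^{k+1}-\hat u=(u^k-\hat u)+(\eta^{k+1}-u^k)$; because $L(p^k-x^{k-1})\in V$ and $\eta^k-u^k\perp V$ one has $\scal{u^k-\hat u}{L(p^k-x^{k-1})}=\scal{\eta^k-\hat u}{L(p^k-x^{k-1})}$ (the telescoping partner), while Cauchy--Schwarz on the remainder yields the last term. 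Summing the two specialized inequalities gives \eqref{e:ineqmain}.

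For \ref{thm:mainii} I would use $x^{k+1}=Tp^{k+1}$ with $\alpha$-averaged quasi-nonexpansiveness to get $\|p^{k+1}-\hat x\|^2\ge\|x^{k+1}-\hat x\|^2+\tfrac{1-\alpha}{\alpha}\|p^{k+1}-x^{k+1}\|^2$, and the subspace projection to get $\|\eta^{k+1}-\hat u\|^2=\|u^{k+1}-\hat u\|^2+\|\eta^{k+1}-u^{k+1}\|^2$. With $\rho=\chi=0$ and $\theta_k\equiv1$ the two leading coupling terms telescope, so that for $\Phi_k=\|x^k-\hat x\|^2/\tau+\|u^k-\hat u\|^2/\gamma+2\scal{L(p^k-x^{k-1})}{\eta^k-\hat u}$ one obtains $\Phi_k\ge\Phi_{k+1}+(\tfrac1\tau-\tfrac1{2\beta})\|p^{k+1}-x^k\|^2+(\tfrac1\gamma-\tfrac1{2\delta})\|\eta^{k+1}-u^k\|^2+(\text{averaged/projection residuals})-2\|L\|\|p^k-x^{k-1}\|\|\eta^{k+1}-u^k\|$. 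The one real obstacle is the mixed-index cross term: by Young's inequality with a balancing scalar $s$ chosen so that $\|L\|s<\tfrac1\tau-\tfrac1{2\beta}$ and $\|L\|/s<\tfrac1\gamma-\tfrac1{2\delta}$ (possible precisely because \eqref{e:parcond} is strict), the telescoped sum of the cross terms is dominated by the quadratic residuals. This makes $(\Phi_k)$ convergent and forces $p^{k+1}-x^k\to0$, $\eta^{k+1}-u^k\to0$, $p^{k+1}-x^{k+1}\to0$, $\eta^{k+1}-u^{k+1}\to0$ and boundedness of $(x^k,u^k)$. Weak convergence then follows from Opial's lemma: $\lim_k(\|x^k-\hat x\|^2/\tau+\|u^k-\hat u\|^2/\gamma)$ exists for every solution, and passing to the limit in the two resolvent inclusions along a weak cluster point---using maximal monotonicity of the graphs, demiclosedness of $\Id-T$ at $0$, and weak closedness of $V$---identifies every cluster point as a solution of Problem~\ref{prob:main}.

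For \ref{thm:mainiii} ($\chi=0$, $D^{-1}=0$, so the $1/(2\delta)$ terms drop) I would first check that $\tau_{k+1}\gamma_{k+1}=\tau_k\gamma_k$ with $\gamma_k$ increasing keeps \eqref{e:parcond} valid at every step, since it holds with equality at $k=0$ and its right-hand side then increases. The choice $\theta_k=(1+2\rho\tau_k)^{-1/2}$ gives $2\rho\tau_k+1=\theta_k^{-2}$, so that $(2\rho\tau_k+1)\|p^{k+1}-\hat x\|^2/\tau_k=\|p^{k+1}-\hat x\|^2/(\theta_k\tau_{k+1})$ and $\|\eta^{k+1}-\hat u\|^2/\gamma_k=\|\eta^{k+1}-\hat u\|^2/(\theta_k\gamma_{k+1})$; after inserting the $T$- and $P_V$-estimates, \eqref{e:ineqmain} becomes $\Theta_k\ge\theta_k^{-1}\Theta_{k+1}$ for $\Theta_k=\|x^k-\hat x\|^2/\tau_k+\|u^k-\hat u\|^2/\gamma_k$, modulo the coupling terms. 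Absorbing those terms is the technical core here, and it uses equality in \eqref{e:parcond} to match the quadratic residuals against the cross terms carrying the $\theta_k$-weights. Iterating $\Theta_{k+1}\le\theta_k\Theta_k$ and using $\tau_k=\tau_0\prod_{j<k}\theta_j$ gives $\|x^k-\hat x\|^2\le\tau_k\Theta_k\le(\tau_k^2/\tau_0)\Theta_0$; the explicit constant and the factor $1+\varepsilon$ then follow from the asymptotics $k\tau_k\to1/\rho$ of the recursion $\tau_{k+1}=\tau_k(1+2\rho\tau_k)^{-1/2}$, together with the value of $1/(\tau_0\gamma_0)$ forced by equality in \eqref{e:parcond}, which reproduces the two coefficients in the stated bound.

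Finally, for \ref{thm:mainiv} I would keep $\tau,\gamma,\theta$ constant and target a one-step contraction of $E_k=(\chi+\tfrac{\mu}{4\delta})\|u^k-\hat u\|^2+(\rho+\tfrac{\mu}{4\beta})\|x^k-\hat x\|^2$. In \eqref{e:ineqmain} the factors $(2\rho\tau+1)$ and $(2\chi\gamma+1)$ supply the contraction, and the values \eqref{e:deftaugamma} together with $\mu=2\sqrt{\rho\chi}/\|L\|$ in \eqref{e:defmualpha} are precisely those that balance the strong-monotonicity surplus against the coupling and cross terms after Young's inequality; the threshold $\theta>(1+\alpha)^{-1}$ ensures the residual quadratic terms keep a nonnegative coefficient, yielding $E_{k+1}\le\omega E_k$ with $\omega=(1+\theta)/(2+\alpha)$, from which \eqref{clec} follows by iteration. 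In all of \ref{thm:mainii}--\ref{thm:mainiv} the recurring difficulty is the same---controlling the mixed-index term $\|L\|\|p^k-x^{k-1}\|\|\eta^{k+1}-u^k\|$ against the quadratic residuals---which is exactly what the parameter condition \eqref{e:parcond} (strict, equality, or through the choice of $\mu$) is engineered to permit.
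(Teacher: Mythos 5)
Your parts (i)--(iii) follow the paper's own proof essentially step by step: the resolvent inclusions, strong monotonicity of $A$ and $B^{-1}$ combined with the identity $2\scal{a-b}{c-a}=\|c-b\|^2-\|a-b\|^2-\|c-a\|^2$, the cocoercivity of $C$ and $D^{-1}$ absorbed via Young's inequality into the $-\|p^{k+1}-x^k\|^2/(2\beta)$ and $-\|\eta^{k+1}-u^k\|^2/(2\delta)$ corrections, the $V$-orthogonality argument with the decomposition $\eta^{k+1}-\hat u=(u^k-\hat u)+(\eta^{k+1}-u^k)$ producing the telescoping partner and the Cauchy--Schwarz remainder, the Fej\'er-type function containing the cross term in (ii) (the paper's $\Upsilon_k$, whose nonnegativity is checked by completing a square using \eqref{e:parcond}), and in (iii) the relations $(1+2\rho\tau_k)\tau_{k+1}/\tau_k=1/\theta_k=\gamma_{k+1}/\gamma_k$, the division by $\tau_k$, the summation with a boundary estimate at index $N$, and the asymptotics $k\rho\tau_k\to 1$. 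These are the paper's steps, with the technical cores correctly identified even where you do not execute them.

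Part (iv), however, contains a genuine gap. The one-step contraction $E_{k+1}\le\omega E_k$ that you claim for $E_k=\bigl(\chi+\tfrac{\mu}{4\delta}\bigr)\|u^k-\hat u\|^2+\bigl(\rho+\tfrac{\mu}{4\beta}\bigr)\|x^k-\hat x\|^2$ is not available: the coupling term $\mu\scal{L(p^{k+1}-\bar x^k)}{u^{k+1}-\hat u}$ with $\bar x^k=x^k+\theta(p^k-x^{k-1})$ leaves, after Young's inequality, terms depending on the \emph{previous} iterate, namely $-\omega\theta\rho\|p^k-x^{k-1}\|^2$ and $-\omega\mu\scal{L(p^k-x^{k-1})}{u^k-\hat u}$, and these cannot be bounded by $E_k$ alone, so no recursion purely in $E_k$ closes. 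What the paper actually proves is a contraction $F_{k+1}\le\omega F_k$ of the augmented energy
\begin{equation*}
F_k=E_k+\omega\rho\|p^k-x^{k-1}\|^2+\omega\mu\scal{L(p^k-x^{k-1})}{u^k-\hat u},
\end{equation*}
with $F_0=E_0$ since $p^0=x^0=:x^{-1}$. Iterating (equivalently, multiplying by $\omega^{-k}$ and telescoping) and then estimating the leftover cross term at index $N$ by Young's inequality with the balancing $\sqrt{\rho/\chi}$ --- which cancels the $\rho\|p^N-x^{N-1}\|^2$ residual exactly but costs $\omega\chi\|u^N-\hat u\|^2$ --- yields the statement. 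This boundary correction is precisely why the left-hand side of \eqref{clec} carries the coefficient $\chi(1-\omega)+\tfrac{\mu}{4\delta}$ rather than $\chi+\tfrac{\mu}{4\delta}$: a clean per-step contraction of $E_k$, if it held, would prove a strictly stronger inequality than the theorem asserts. So your plan for (iv) must be repaired by passing to the augmented (memory-carrying) energy and a final boundary estimate, not a direct contraction of $E_k$.
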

\begin{proof}
	\ref{thm:maini}:
	Fix $k\in\NN$ and let $(\hat{x},\hat{u})$ be a solution to Problem~\ref{prob:main}. We 
	have
	$\hat{x}\in\Fix T$, $\hat{u}\in V$ and, using $B\infconv D=(B^{-1}+D^{-1})^{-1}$, we 
	deduce $-(L^*\hat{u}+C\hat{x})\in A\hat{x}$ and $L\hat{x}-D^{-1}\hat{u}\in
	B^{-1}\hat{u}$. 
	Therefore,
	since \eqref{e:alg2} yields
	\begin{equation}
		\label{e:algoinc}
		\begin{cases}
			\frac{x^k-p^{k+1}}{\tau_{k}} - L^{*}u^{k+1} -Cx^k\in Ap^{k+1}\\
			\frac{u^k -\eta^{k+1}}{\gamma_{k}} + L\bar{x}^{k} -D^{-1}u^k\in B^{-1}\eta^{k+1}
		\end{cases}
	\end{equation}
	and $A$ and $B^{-1}$ are $\rho$ and $\chi$-strongly monotone, respectively, we deduce
	\begin{multline}
		\label{e:cocoer}
		\!\!\Scal{\dfrac{x^k\!-\!p^{k+1}}{\tau_{k}} - L^{*}(u^{k+1}\!-\hat{u})}{\!p^{k+1}\!-\hat{x}\!}+
		\Scal{\dfrac{u^k\!-\eta^{k+1}}{\gamma_{k}}\!+\!L(\bar{x}^{k} - 
			\hat{x})}{\!\eta^{k+1}\!-\hat{u}\!}\\
		-\scal{Cx^k-C\hat{x}}{p^{k+1}-\hat{x}}-
		\scal{D^{-1}u^k-D^{-1}\hat{u}}{\eta^{k+1}-\hat{u}}\\
		\ge\rho\left\|p^{k+1}-\hat{x}\right\|^2+\chi\|\eta^{k+1}-\hat{u}\|^2.
	\end{multline}
	From the cocoercivity of $C$ and $D^{-1}$ we have from $ab\le \beta 
	a^2+b^2/(4\beta)$ that
	\begin{align}
		\label{e:parteC}
		\scal{Cx^k-C\hat{x}}{p^{k+1}-\hat{x}}&=\scal{Cx^k-C\hat{x}}{p^{k+1}-{x}^k}+
		\scal{Cx^k-C\hat{x}}{x^{k}-\hat{x}}\nonumber\\
		&\ge-\|Cx^k-C\hat{x}\|\|p^{k+1}-{x}^k\|+\beta\|Cx^k-C\hat{x}\|^2\nonumber\\
		&\ge -\frac{\|p^{k+1}-{x}^k\|^2}{4\beta},
	\end{align}
	and, analogously, $	\scal{D^{-1}u^k-D^{-1}\hat{u}}{\eta^{k+1}-\hat{u}}\ge 
	-\frac{\|\eta^{k+1}-{u}^k\|^2}{4\delta}$.
	Hence, by using 
	\cite[Lemma~2.12(i)]{14.Livre1} in \eqref{e:cocoer}, we deduce
	\begin{align}
		\label{e:ec4}
		\frac{\|x^k\!-\hat{x}\|^2}{\tau_{k}}+\frac{\|u^k\!-\hat{u}\|^2}{\gamma_{k}}
		&\geq \left(\!2\rho+\!\frac{1}{\tau_k}\!\right)\left\|p^{k+1}-\hat{x}\right\|^2 
		+\left(\!2\chi+\!\frac{1}{\gamma_k}\!\right)\|\eta^{k+1}-\hat{u}\|^2\nonumber\\
		&\hspace{-.5cm}+ 2\left[\left\langle 
		L(p^{k+1}-\hat{x}) \,|\, 
		u^{k+1}-\hat{u}\right\rangle-\left\langle L(\bar{x}^k -\hat{x}) \,|\, 
		\eta^{k+1}-\hat{u}\right\rangle\right]\nonumber\\
		&\hspace{-.5cm}
		+\|\eta^{k+1}\!-{u}^k\|^2\left(\!\frac{1}{\gamma_{k}}-\frac{1}{2\delta}\!\right)
		+\|p^{k+1}\!-x^k\|^2\left(\!\frac{1}{\tau_{k}}-\frac{1}{2\beta}\!\right).
	\end{align}
	Moreover, \eqref{e:alg2}, $\mbox{ran}(L)\subset V$ and $u^k-\eta^k\in V^{\bot}$, for 
	every $k\in\NN$, yield
	\begin{align}
		\label{e:parteL}
		\scal{L(p^{k+1}-\hat{x})}{u^{k+1}-\hat{u}}
		-\scal{L(\bar{x}^k -\hat{x})}{\eta^{k+1}-\hat{u}}&\nonumber\\
		&\hspace{-5.8cm}= \scal{L(p^{k+1}-\hat{x})}{u^{k+1}-\hat{u}}-\scal{L({x}^k 
			-\hat{x})}{\eta^{k+1}-\hat{u}}
		\nonumber\\
		&\hspace{-5.4cm}-\theta_{k-1}\scal{L(p^k-x^{k-1})}{\eta^{k+1}-\hat{u}}\nonumber\\
		&\hspace{-5.8cm}= \scal{L(p^{k+1}-{x}^k)}{\eta^{k+1}-\hat{u}}
		-\theta_{k-1}\scal{L(p^k-x^{k-1})}{\eta^{k+1}-\hat{u}}\nonumber\\
		&\hspace{-5.8cm}= \scal{L(p^{k+1}-{x}^k)}{\eta^{k+1}-\hat{u}}
		-\theta_{k-1}\scal{L(p^k-x^{k-1})}{\eta^{k+1}-u^k}\nonumber\\
		&\hspace{-5.4cm}-\theta_{k-1}\scal{L(p^k-x^{k-1})}{\eta^{k}-\hat{u}}\nonumber\\
		&\hspace{-5.8cm}\ge 
		\scal{L(p^{k+1}-{x}^k)}{\eta^{k+1}-\hat{u}}-\theta_{k-1}\|L\|\|p^k-x^{k-1}\|\|\eta^{k+1}-u^k\|\nonumber\\
		&\hspace{-5.3cm}-\theta_{k-1}\scal{L(p^k-x^{k-1})}{\eta^{k}-\hat{u}},
	\end{align}
	which, together with \eqref{e:ec4}, yield \eqref{e:ineqmain}. 
	
	\ref{thm:mainii}: For every $k\in\NN$,
	it follows from Theorem~\ref{thm:main}\eqref{thm:maini}, \cite[Lemma~2.1]{20.Opti04},
	$\rho=\chi=0$, $\theta_k\equiv 1$, 
	$\tau_k\equiv\tau$, $\gamma_k\equiv\gamma$, and the properties of $T$ and $P_X$ that
	\begin{align}
		\label{e:aux1}
		\frac{\|p^k-\hat{x}\|^2}{\tau}+	\frac{\|\eta^k-\hat{u}\|^2}{\gamma}
		&\geq \frac{\|p^{k+1}-\hat{x}\|^2}{\tau}
		+\left(\frac{1-\alpha}
		{\alpha}\right)\frac{\|x^k-p^k\|^2}{\tau}+\frac{\|u^k-\eta^k\|^2}{\gamma}\nonumber\\	 
		&\hspace{-2cm}+\frac{\|\eta^{k+1}\!-\hat{u}\|^2}{\gamma}
		+\|p^{k+1}\!-x^k\|^2\!\left(\!\frac{1}{\tau}-\frac{1}{2\beta}\!\right)\! + 
		\|\eta^{k+1}\!-u^k\|^2\!\left(\!\frac{1}{\gamma}-\frac{1}{2\delta}\!\right)\nonumber\\
		&\hspace{-2cm}+2\scal{L(p^{k+1}-{x}^k)}{\eta^{k+1}-\hat{u}} 
		-2\scal{L(p^k-x^{k-1})}{\eta^{k}-\hat{u}}	\nonumber\\
		&\hspace{-2cm}-2\|L\|\|p^k-x^{k-1}\|\|\eta^{k+1}-u^k\|\nonumber\\
		&\hspace{-2cm}\geq \frac{\|p^{k+1}-\hat{x}\|^2}{\tau}
		+\frac{\|u^k-\eta^k\|^2}{\gamma}+
		\|\eta^{k+1}-u^k\|^2\left(\frac{1}{\gamma}-\frac{1}{2\delta}-\frac{1}{\nu}\right) 
		\nonumber\\
		&\hspace{-2cm}+
		\left(\frac{1-\alpha}
		{\alpha}\right)\frac{\|x^k-p^k\|^2}{\tau}+\frac{\|\eta^{k+1}-\hat{u}\|^2}{\gamma} 
		+\|p^{k+1}-x^k\|^2\left(\frac{1}{\tau}-\frac{1}{2\beta}\right)
		\nonumber\\
		&\hspace{-2cm}+2\scal{L(p^{k+1}-{x}^k)}{\eta^{k+1}-\hat{u}} 
		-2\scal{L(p^k-x^{k-1})}{\eta^{k}-\hat{u}}	\nonumber\\
		&\hspace{-2cm}-\nu\|L\|^2\|p^k-x^{k-1}\|^2,
	\end{align}
	for every $\nu>0$. If we let 
	$\varepsilon=\left[\left(\frac{1}{\tau}-\frac{1}{2\beta}\right)\left(\frac{1}{\gamma}-\frac{1}{2\delta}\right)-\|L\|^2\right]
	\left(\frac{\beta\tau}{2\beta-\tau}\right)>0,$
	and we choose $\nu=(\frac{1}{\gamma}-\frac{1}{2\delta}-\varepsilon)^{-1}>0$, we have
	$\nu\|L\|^2= 
	(\frac{1}{\tau}-\frac{1}{2\beta})-\nu\varepsilon(\frac{1}{\tau}-\frac{1}{2\beta})$. Hence,
	from \eqref{e:aux1} we have 
	\begin{multline}
		\label{e:fejer}
		\Upsilon_k+\frac{\left\|p^k-\hat{x}\right\|^2}{\tau}\ge\Upsilon_{k+1}+
		\frac{\left\|p^{k+1}-\hat{x}\right\|^2}{\tau}+\left(\frac{1-\alpha}
		{\alpha}\right)\frac{\|x^k-p^k\|^2}{\tau}+\frac{\|u^k-\eta^k\|^2}{\gamma}\\+
		\varepsilon\|\eta^{k+1}-u^k\|^2+\nu\varepsilon\left(\frac{1}{\tau}-\frac{1}{2\beta}\right)\|p^k-x^{k-1}\|^2,
	\end{multline}
	where, for every $k\in\NN$,
	\begin{equation}
		\Upsilon_k=\frac{\|\eta^k-\hat{u}\|^2}{\gamma} + 
		2\scal{L(p^k-x^{k-1})}{\eta^{k}-\hat{u}}+\left(\frac{1}{\tau}-\frac{1}{2\beta}\right)\|p^k-x^{k-1}\|^2.
	\end{equation}
	Note that from \eqref{e:parcond} we have, for every $k\in\NN$,
	\begin{align}
		\Upsilon_k&\ge \frac{\|\eta^k-\hat{u}\|^2}{\gamma} + 
		2\scal{L(p^k-x^{k-1})}{\eta^{k}-\hat{u}}+
		\frac{\|L\|^2}{\left(\frac{1}{\gamma}-\frac{1}{2\delta}\right)}\|p^k-x^{k-1}\|^2\nonumber\\
		&\ge\frac{\|\eta^k-\hat{u}\|^2}{\gamma} + 
		2\scal{L(p^k-x^{k-1})}{\eta^{k}-\hat{u}}+
		\gamma\|L\|^2\|p^k-x^{k-1}\|^2\nonumber\\
		&\ge\frac{1}{\gamma}\|\eta^k-\hat{u}+\gamma L(p^k-x^{k-1})\|^2\ge 0,
	\end{align}
	and, hence, from \eqref{e:fejer} we deduce that 
	$(\Upsilon_k+\|p^k-\hat{x}\|^2/\tau)_{k\in\NN}$ is a F\'ejer sequence. We 
	deduce 
	from \cite[Lemma~5.31]{14.Livre1} that
	$(\eta^k)_{k\in\NN}$ and $(p^k)_{k\in\NN}$ are bounded,
	\begin{equation}
		\label{e:tozero}
		x^k-p^k\to 0,\quad u^k-\eta^k\to 0,\quad \eta^{k+1}-u^k\to 0,\quad \text{and}\quad 
		p^k-x^{k-1}\to 0.
	\end{equation}
	Therefore, there 
	exist weak accumulation points $\bar{x}$ and $\bar{u}$ of the 
	sequences $(p^k)_{k\in\NN}$ and $(\eta^k)_{k\in\NN}$, respectively, 
	say $p^{k_n}\rightharpoonup \bar{x}$ and $\eta^{k_n}\rightharpoonup \bar{u}$ and, from 
	\eqref{e:tozero}, we have $u^{k_n}\rightharpoonup \bar{u}$,
	$u^{k_n+1}\rightharpoonup \bar{u}$,
	$p^{k_n}\rightharpoonup \bar{x}$, $p^{k_n+1}\rightharpoonup \bar{x}$, 
	$x^{k_n-1}\rightharpoonup \bar{x}$ 
	and
	$\bar{x}^{k_n}=x^{k_n}+p^{k_n}-x^{k_n-1}\rightharpoonup \bar{x}$. Since $T$ and $P_V$
	are nonexpansive, $\Id-T$ and $\Id-P_V$ are maximally monotone 
	\cite[Example~20.29]{14.Livre1} and, therefore, they have weak-strong closed graphs
	\cite[Proposition~20.38]{14.Livre1}.
	Hence, it follows from \eqref{e:tozero} that $(\Id-T)p^k\to 0$ and $(\Id-P_V)\eta^k\to 0$
	and, hence, $(\bar{x},\bar{u})\in \Fix T\times V$.
	Moreover,
	\eqref{e:algoinc} can be written equivalently as
	\begin{equation}
		(v^{k_n},w^{k_n})\in (\boldsymbol{M}+\boldsymbol{Q})(p^{k_n+1},\eta^{k_n+1}),
	\end{equation}
	where $\boldsymbol{M}\colon (p,\eta)\mapsto (Ap+L^*\eta)\times(B^{-1}\eta-Lp)$ is 
	maximally 
	monotone \cite[Proposition~2.7(iii)]{15.Siopt1}, $\boldsymbol{Q}\colon 
	(p,\eta)\mapsto 
	(Cp,D^{-1}\eta)$ is $\min\{\beta,\delta\}-$cocoercive, and
	\begin{equation}
		\begin{cases}
			v^{k}:=\frac{x^k-p^{k+1}}{\tau}-L^*(u^{k+1}-\eta^{k+1})+Cp^{k+1}-Cx^k\\
			w^{k}:=\frac{u^k-\eta^{k+1}}{\gamma}+L(x^{k}-p^{k+1}+p^k-x^{k-1})+D^{-1}\eta^{k+1}
			-D^{-1}u^k.
		\end{cases}
	\end{equation}
	It follows from 
	\cite[Corollary~25.5]{14.Livre1} that $\boldsymbol{M}+\boldsymbol{Q}$ is maximally 
	monotone and, since \eqref{e:tozero} and the uniform continuity of $C$, $D$ and $L$
	yields $v^{k_n}\to0$ and $w^{k_n}\to 0$, we deduce from the weak-strong closedness 
	of the 
	graph of $\boldsymbol{M}+\boldsymbol{Q}$ that 
	$(\bar{x},\bar{u})$ is a solution to Problem~\ref{prob:main}, and the result follows. 
	
	\ref{thm:mainiii}: Fix $ k\in\NN$. Since $\rho>0$, $\delta=+\infty$, $\chi=0$, 
	from Theorem~\ref{thm:main}\eqref{thm:maini} we have
	\begin{align}
		\label{e:ineqmainiii}
		\frac{\|x^k-\hat{x}\|^2}{\tau_{k}}+\frac{\|u^k-\hat{u}\|^2}{\gamma_{k}}
		&\geq (2\rho\tau_k+1)\frac{\tau_{k+1}}{\tau_k}\frac{\|p^{k+1}-\hat{x}\|^2}{\tau_{k+1}}
		+\frac{\gamma_{k+1}}{\gamma_k}\frac{\|\eta^{k+1}-\hat{u}\|^2}{\gamma_{k+1}}
		\nonumber\\ 
		&\hspace{-2cm}+2\scal{L(p^{k+1}-{x}^k)}{\eta^{k+1}-\hat{u}} 
		-2\theta_{k-1}\scal{L(p^k-x^{k-1})}{\eta^{k}-\hat{u}}	\nonumber\\
		&\hspace{-2cm}	+\left\|p^{k+1}-x^k\right\|^2\left(\frac{1}{\tau_{k}}-\frac{1}{2\beta}\right)
		-\theta_{k-1}^2\gamma_k\|L\|^2\|p^k-x^{k-1}\|^2,
	\end{align}
	where we have used $2ab\le a^2/\gamma+\gamma b^2$.
	Moreover, it follows from \eqref{e:defparam} that 
	\begin{equation}
		\label{e:paraccel}(\forall k\in\NN)\quad 
		(1+2\rho\tau_{k})\dfrac{\tau_{k+1}}{\tau_{k}} = 
		(1+2\rho\tau_{k})\theta_{k} = 
		\dfrac{1}{\theta_{k}} = \dfrac{\gamma_{k+1}}{\gamma_{k}},
	\end{equation}
	which, combined with \eqref{e:ineqmainiii}, yields
	\begin{align}
		\label{e:ineqmainiii2}
		\frac{\|x^k-\hat{x}\|^2}{\tau_{k}}+\frac{\|u^k-\hat{u}\|^2}{\gamma_{k}}
		&\geq \frac{1}{\theta_k}\left(\frac{\|p^{k+1}-\hat{x}\|^2}{\tau_{k+1}}+
		\frac{\|\eta^{k+1}-\hat{u}\|^2}{\gamma_{k+1}}\right)\nonumber\\
		&\hspace{-2cm}+2\scal{L(p^{k+1}-{x}^k)}{\eta^{k+1}-\hat{u}} 
		-2\theta_{k-1}\scal{L(p^k-x^{k-1})}{\eta^{k}-\hat{u}}\nonumber\\
		&\hspace{-2cm}+\left\|p^{k+1}-x^k\right\|^2\left(\frac{1}{\tau_{k}}-\frac{1}{2\beta}\right) 
		-\theta_{k-1}^2\gamma_k\|L\|^2\|p^k-x^{k-1}\|^2.
	\end{align}
	Now define
	\begin{equation}
		\label{e:defDelta}
		(\forall k\in \mathbb{N})\quad\Delta_{k}=  
		\dfrac{\left\|x^k-\hat{x}\right\|^2}{\tau_{k}}+\dfrac{\left\|u^k-\hat{u}\right\|^2}{\gamma_{k}}
		 .
	\end{equation}
	Dividing \eqref{e:ineqmainiii2} by $\tau_{k}$ and using 
	$\theta_{k}\tau_{k} =\tau_{k+1}$ 
	we 	obtain from the nonexpansivity of $P_V$ and $T$ that
	\begin{align}
		\label{e:auxsxs}
		\dfrac{\Delta_{k}}{\tau_{k}} &\geq \dfrac{\Delta_{k+1}}{\tau_{k+1}} 
		+\frac{2}{\tau_k}\scal{L(p^{k+1}-{x}^k)}{\eta^{k+1}-\hat{u}} 
		-\frac{2}{\tau_{k-1}}\scal{L(p^k-x^{k-1})}{\eta^{k}-\hat{u}}	\nonumber\\
		&\hspace{.5cm}+\frac{\left\|p^{k+1}-x^k\right\|^2}{\tau_k^2}\left(1-\frac{\tau_k}{2\beta}\right)
		-\gamma_k\tau_k\|L\|^2\frac{\|p^k-x^{k-1}\|^2}{\tau_{k-1}^2}.
	\end{align}
	In addition, \eqref{e:parcond} with equality
	reduces to
	\begin{equation}
		\label{e:parcondiii}
		\|L\|^2=
		\left(\frac{1}{\tau_0}-\frac{1}{2\beta}\right)\frac{1}{\gamma_0}\quad
		\Leftrightarrow\quad \gamma_0\tau_0\|L\|^2=
		\left(1-\frac{\tau_0}{2\beta}\right).
	\end{equation}
	Since, for every $k\in\NN\setminus\{0\}$, $\gamma_k\tau_k=\gamma_0\tau_0$ and 
	$\{\tau_{k}\}_{k\in\NN}$ is decreasing (see \eqref{e:defparam}), 
	we have from \eqref{e:parcondiii} that
	\begin{equation}
		\label{e:paraccel2}
		\gamma_{k}\tau_{k}\|L\|^2 =\gamma_{0}\tau_{0}\|L\|^2=
		\left(1-\frac{\tau_0}{2\beta}\right)\le \left(1-\frac{\tau_{k-1}}{2\beta}\right),
	\end{equation}
	and \eqref{e:auxsxs} yields
	\begin{align}
		\label{e:almost}
		\dfrac{\Delta_{k}}{\tau_{k}} &\geq \dfrac{\Delta_{k+1}}{\tau_{k+1}} 
		+\frac{\left\|p^{k+1}-x^k\right\|^2}{\tau_k^2}\left({1}-\frac{\tau_{k}}{2\beta}\right)
		-\frac{\|p^k-x^{k-1}\|^2}{\tau_{k-1}^2}\left({1}-\frac{\tau_{k-1}}{2\beta}\right)
		\nonumber\\
		&\hspace{.5cm}+\frac{2}{\tau_k}\scal{L(p^{k+1}-{x}^k)}{\eta^{k+1}-\hat{u}} 
		-\frac{2}{\tau_{k-1}}\scal{L(p^k-x^{k-1})}{\eta^{k}-\hat{u}}.
	\end{align}	
	Now fix $N\geq 1$. By adding from $k=0$ to $k=N-1$ in \eqref{e:almost},
	using that $\bar{x}^0=x^0$ and defining $p^{0}=x^{0}=:x^{-1}$, we obtain from 
	$u^N=P_V\eta^N$, and 
	$\ran L\subset V$ that 
	\begin{align}
		\dfrac{\Delta_{0}}{\tau_{0}} &\geq \dfrac{\Delta_{N}}{\tau_{N}} + 
		\dfrac{\left\|p^{N}-x^{N-1}\right\|^{2}}{\tau_{N-1}^{2}}\left({1}-\frac{\tau_{N-1}}{2\beta}\right)+\dfrac{2}{\tau_{N-1}}
		\left\langle 
		L(p^{N}-x^{N-1}) \,|\, u^{N} - \hat{u}\right\rangle \nonumber\\
		&\geq \dfrac{\Delta_{N}}{\tau_{N}}  - \frac{\|L\|^{2}}{(1-\frac{\tau_{N-1}}{2\beta})} 
		\left\|u^{N} 
		-\hat{u}\right\|^{2}\nonumber\\
		& = 
		\dfrac{1}{\tau_{N}}\left(\Delta_{N}-\frac{\gamma_N\tau_N\|L\|^{2}}{(1-\frac{\tau_{N-1}}{2\beta})}
		\frac{\left\|u^{N} -\hat{u}\right\|^{2}}{\gamma_N}\right)\geq 
		\frac{\|x^N-\hat{x}\|^2}{\tau_N^2},
		\label{e:ec12}
	\end{align}
	where the last inequality follows from \eqref{e:paraccel2} and \eqref{e:defDelta}.
	Multiplying \eqref{e:ec12} by $\tau_{N}^{2}$ and using $\gamma_{N}\tau_{N} = 
	\gamma_{0}\tau_{0}$ and \eqref{e:parcondiii}, we conclude that
	\begin{equation}
		\left\|x^{N}-\hat{x}\right\|^{2} \leq 
		\tau_{N}^{2}\left(\dfrac{\left\|x^{0}-\hat{x}\right\|^{2}}{\tau_{0}^{2}} + 
		\frac{\|L\|^{2}}{(1-\frac{\tau_0}{2\beta})} 
		\left\|u^{0}-\hat{u}\right\|^{2}\right).
	\end{equation}
	The result follows from $\lim_{N \to \infty} N\rho\tau_{N} 
	= 1$ \cite[Corollary~1]{3.CP}.
	%
	
	\ref{thm:mainiv}: Fix $k\in\NN$. Note that \eqref{e:deftaugamma} yields 
	$\left(\frac{1}{\tau} - \frac{1}{2\beta}\right) 
	\left(\frac{1}{\gamma} - 
	\frac{1}{2\delta}\right) = \left\|L\right\|^{2}$ and, from
	\eqref{e:ec4}, 	$u^{k+1}=P_V\eta^{k+1}$ and $\ran L\subset V$ we have
	\begin{align}
		\label{kec1}
		\frac{\left\|u^k-\hat{u}\right\|^2}{2\gamma} + 
		\frac{\left\|x^k-\hat{x}\right\|^2}{2\tau} 
		&\geq 
		(2\rho\tau+1)\frac{\|p^{k+1}-\hat{x}\|^2}{2\tau}+(2\chi\gamma+1)\frac{\|\eta^{k+1}-\hat{u}\|^2}{2\gamma}
		\nonumber\\ 
		&\hspace{-1cm}+\frac{\big\|p^{k+1}-x^k\big\|^2 }{2}
		\left(\frac{1}{\tau}-\frac{1}{2\beta}\right) + 
		\frac{\|\eta^{k+1}-u^k\|^2}{2}\left(\frac{1}{\gamma}-\frac{1}{2\delta}\right) \nonumber\\
		&\hspace{-1cm}+\scal{L(p^{k+1}-\overline{x}^k)}{\eta^{k+1}-\hat{u}}.
	\end{align}
	Hence, by defining
	\begin{equation}
		\label{e:defOmega}
		(\forall k\in\NN)\quad \Omega_{k} := \left(\chi +\frac{\mu}{4\delta}\right) \|u^{k}-\hat{u} 
		\|^{2} + 
		\left(\rho 
		+\frac{\mu}{4\beta}\right) \|x^{k}-\hat{x} \|^{2},
	\end{equation} 
	multiplying \eqref{kec1} by $\mu$ and using 	\eqref{e:defmualpha}, 
	\eqref{e:deftaugamma},  $u^{k+1} =P_{V} 
	(\eta^{k+1})$, $\ran(L)\subset V$, and the nonexpansivity of $T$ and $P_V$
	we have
	\begin{align}
		\label{kec3}
		\Omega_{k} &\geq \Omega_{k+1} +\mu\rho \big\|p^{k+1}-\hat{x}\big\|^{2} +\mu \chi 
		\big\|\eta^{k+1}-\hat{u}\big\|^{2}+\rho \big\|p^{k+1}-x^{k}\big\|^{2} \nonumber\\
		& \hspace{4cm}
		+\chi 
		\big\|\eta^{k+1}-u^{k}\big\|^{2} +\mu	
		\scal{L(p^{k+1}-\overline{x}^k)}{\eta^{k+1}-\hat{u}}\nonumber\\
		& \geq (1+\alpha)\Omega_{k+1} + \rho \big\|p^{k+1}-x^{k}\big\|^{2} +\chi 
		\big\|u^{k+1}-u^{k}\big\|^{2}\nonumber\\
		&\hspace{4cm}+ \mu \scal{L(p^{k+1}-\overline{x}^k)}{u^{k+1}-\hat{u}}.
	\end{align}
	Moreover, for every $\omega,\lambda>0$ we have
	\begin{align}
		\label{kec4}
		\mu &\scal{L(p^{k+1}-\overline{x}^k)}{u^{k+1}-\hat{u}} \nonumber\\
		&\hspace{1.5cm}= \mu 
		\scal{L(p^{k+1}-x^{k})}{u^{k+1}-\hat{u}} -  \mu\theta
		\scal{L(p^{k}-x^{k-1})}{u^{k+1}-\hat{u}}\nonumber\\
		&\hspace{1.5cm}=\mu \scal{L(p^{k+1}-x^{k})}{u^{k+1}-\hat{u}} - \omega \mu 
		\scal{L(p^{k}-x^{k-1})}{u^{k}-\hat{u}}\nonumber\\
		&\hspace{1.8cm}- \omega\mu \scal{L(p^{k}-x^{k-1})}{u^{k+1}-u^{k}}-(\theta-\omega) 
		\mu 
		\scal{L(p^{k}-x^{k-1})}{u^{k+1}-\hat{u}}\nonumber\\
		&\hspace{1.5cm}\geq \mu \scal{L(p^{k+1}-x^{k})}{u^{k+1}-\hat{u}} - \omega \mu 
		\scal{L(p^{k}-x^{k-1})}{u^{k}-\hat{u}}\nonumber\\
		&\hspace{1.8cm}-\omega\mu\left\|L\right\|\left(\dfrac{\lambda\left\|p^{k}-x^{k-1}\right\|^{2}}{2}
		+ 
		\dfrac{\left\|u^{k+1}-u^{k}\right\|^{2}}{2\lambda}\right)\nonumber\\
		&\hspace{1.8cm}- 
		(\theta-\omega)\mu\left\|L\right\|\left(\dfrac{\lambda\left\|p^{k}-x^{k-1}\right\|^{2}}{2} 
		+ \dfrac{\left\|u^{k+1}-\hat{u}\right\|^{2}}{2\lambda}\right)\nonumber\\
		&\hspace{1.5cm}=\mu \scal{L(p^{k+1}-x^{k})}{u^{k+1}-\hat{u}} - \omega \mu 
		\scal{L(p^{k}-x^{k-1})}{u^{k}-\hat{u}}\nonumber\\
		&\hspace{1.8cm}
		-\mu\theta\lambda\left\|L\right\| \dfrac{\left\|p^{k}-x^{k-1}\right\|^{2}}{2} - 
		\dfrac{\omega\mu\left\|L\right\| \left\|u^{k+1}-u^{k}\right\|^{2}}{2\lambda}\nonumber\\
		&\hspace{1.8cm}-(\theta-\omega)\mu\left\|L\right\|\dfrac{\left\|u^{k+1}-\hat{u}\right\|^{2}}{2\lambda}.
	\end{align}
	By choosing $\lambda=\omega\sqrt{\dfrac{\rho}{\chi}}$, from \eqref{kec3}, 
	\eqref{kec4} and \eqref{e:defmualpha}, we obtain
	\begin{equation}
		\label{kec5}
		\begin{split}
			\Omega_{k}&\geq \dfrac{\Omega_{k+1}}{\omega} + \left(1+\alpha 
			-\dfrac{1}{\omega}\right)\Omega_{k+1}+ \rho \left\|p^{k+1}-x^{k}\right\|^{2}  \\
			& \hspace{.5cm}+\mu \scal{L(p^{k+1}-x^{k})}{u^{k+1}-\hat{u}}- \omega \mu 
			\scal{L(p^{k}-x^{k-1})}{u^{k}-\hat{u}}\\
			&\hspace{.5cm}-\omega\theta\rho 
			\left\|p^{k}-x^{k-1}\right\|^{2}- 
			\left(\dfrac{\theta-\omega}{\omega}\right)\chi\left\|u^{k+1}-\hat{u}\right\|^{2}.
		\end{split}
	\end{equation}
	Since $\theta\in\left](1+\alpha)^{-1},1\right]$, by setting 
	$\omega=\dfrac{1+\theta}{2+\alpha}\in \left[(1+\alpha)^{-1},\theta\right[$, we have 
	$1+\alpha -\dfrac{1}{\omega} = 
	\dfrac{\theta-\omega}{\omega}>0$. Hence,  since \eqref{e:defOmega} yields 
	$\Omega_{k+1}\geq \chi 
	\left\|u^{k+1}-\hat{u}\right\|^{2}$, from 
	\eqref{kec5} and $\theta\le 1$ we have 
	\begin{equation}
		\label{kec6}
		\begin{split}
			\Omega_{k}&\geq \dfrac{\Omega_{k+1}}{\omega} + \rho \left\|p^{k+1}-x^{k}\right\|^{2} 
			-\omega\rho 
			\left\|p^{k}-x^{k-1}\right\|^{2}\\
			& \hspace{.5cm}+\mu \scal{L(p^{k+1}-x^{k})}{u^{k+1}-\hat{u}}- \omega \mu 
			\scal{L(p^{k}-x^{k-1})}{u^{k}-\hat{u}}.
		\end{split}
	\end{equation}
	Moreover, using $p^{0}=x^{0}=:x^{-1}$, multiplying \eqref{kec6} by 
	$\omega^{-k}$ and adding from $k=0$ to $k=N-1$, we conclude from the definition of 
	$\mu$ that 
	\begin{align}
		\label{kec7}
		\Omega_{0}&\geq \omega^{-N}\Omega_{N} +\omega^{-N+1} \rho 
		\left\|p^{N}-x^{N-1}\right\|^{2}+\mu\omega^{-N+1} 
		\scal{L(p^{N}-x^{N-1})}{u^{N}-\hat{u}}\nonumber\\
		&\geq \omega^{-N}\Omega_{N} +\omega^{-N+1} \rho 
		\left\|p^{N}-x^{N-1}\right\|^{2}\nonumber\\
		& \hspace{.5cm}-\mu\omega^{-N+1}\left\|L\right\|\left(\sqrt{\dfrac{\rho}{\chi}} 
		\dfrac{\left\|p^{N}-x^{N-1}\right\|^{2}}{2} + \sqrt{\dfrac{\chi}{\rho}} 
		\dfrac{\left\|u^{N}-\hat{u}\right\|^{2}}{2}\right)\nonumber\\
		& = \omega^{-N}\Omega_{N} - \omega^{-N+1}\chi \left\|u^{N}-\hat{u}\right\|^{2},
	\end{align}
	or, equivalently,
	\begin{multline}
		\label{kec8}
		\omega^{N}\left(\left(\chi + \dfrac{\mu}{4\delta}\right)\left\|u^{0}-\hat{u}\right\|^{2} + 
		\left(\rho + \dfrac{\mu}{4\beta}\right)\left\|x^{0}-\hat{x}\right\|^{2}\right)\\
		\geq \left(\chi(1-\omega) + \dfrac{\mu}{4\delta}\right)\left\|u^{N}-\hat{u}\right\|^{2} + 
		\left(\rho + \dfrac{\mu}{4\beta}\right)\left\|x^{N}-\hat{x}\right\|^{2},
	\end{multline}
	which proves the linear convergence since $\omega <\theta\le 1$.
\end{proof}

\begin{remark}
	\begin{enumerate}
		\item Note that condition \eqref{e:parcond} is weaker than the condition 
		needed in \cite{2.vu}. Indeed, this condition in our case reads
		$2\rho\min\{\beta,\delta\}>1$, where 
		$\rho=\min\left\{{\gamma}^{-1},{\tau}^{-1}\right\}(1-\sqrt{\tau\gamma\|L\|^2})$,
		which implies $2\min\{\delta,\beta\}>\frac{1}{\rho}>\max\{\gamma,\tau\}$,
		\begin{equation}
			\left(1-\frac{\tau}{2\beta}\right)>\sqrt{\tau\gamma \|L\|^2}\quad \text{and}\quad
			\left(1-\frac{\gamma}{2\delta}\right)>\sqrt{\tau\gamma \|L\|^2}.
		\end{equation}
		Thus, by multiplying last expressions we obtain
		$
		\left(1-\frac{\tau}{2\beta}\right)\left(1-\frac{\gamma}{2\delta}\right)>\tau\gamma\|L\|^2,
		$
		which implies \eqref{e:parcond}. Our condition is strictly weaker, as it can be seen 
		in Figure~\ref{fig:regions}, in which we plot the case $\|L\|=1$ and $\delta=\beta=b$, for 
		$b=1$, 
		$b=1/2$ 
		and $b=1/4$. That is, we compare regions 
		\begin{align}
			R_b&=\Menge{(\tau,\gamma)\in[0,2b]\times[0,2b]}
			{\min\left\{\frac{1-\sqrt{\tau\gamma}}{\tau},\frac{1-\sqrt{\tau\gamma}}{\gamma}\right\}>\frac{1}{2b}}\\
			S_b&=\Menge{(\tau,\gamma)\in[0,2b]\times[0,2b]}
			{\left(1-\frac{\tau}{2b}\right)\left(1-\frac{\gamma}{2b}\right)>\tau\gamma}.
		\end{align}
		\begin{figure}[!ht]
			\centering
			\includegraphics[width=0.29\textwidth]{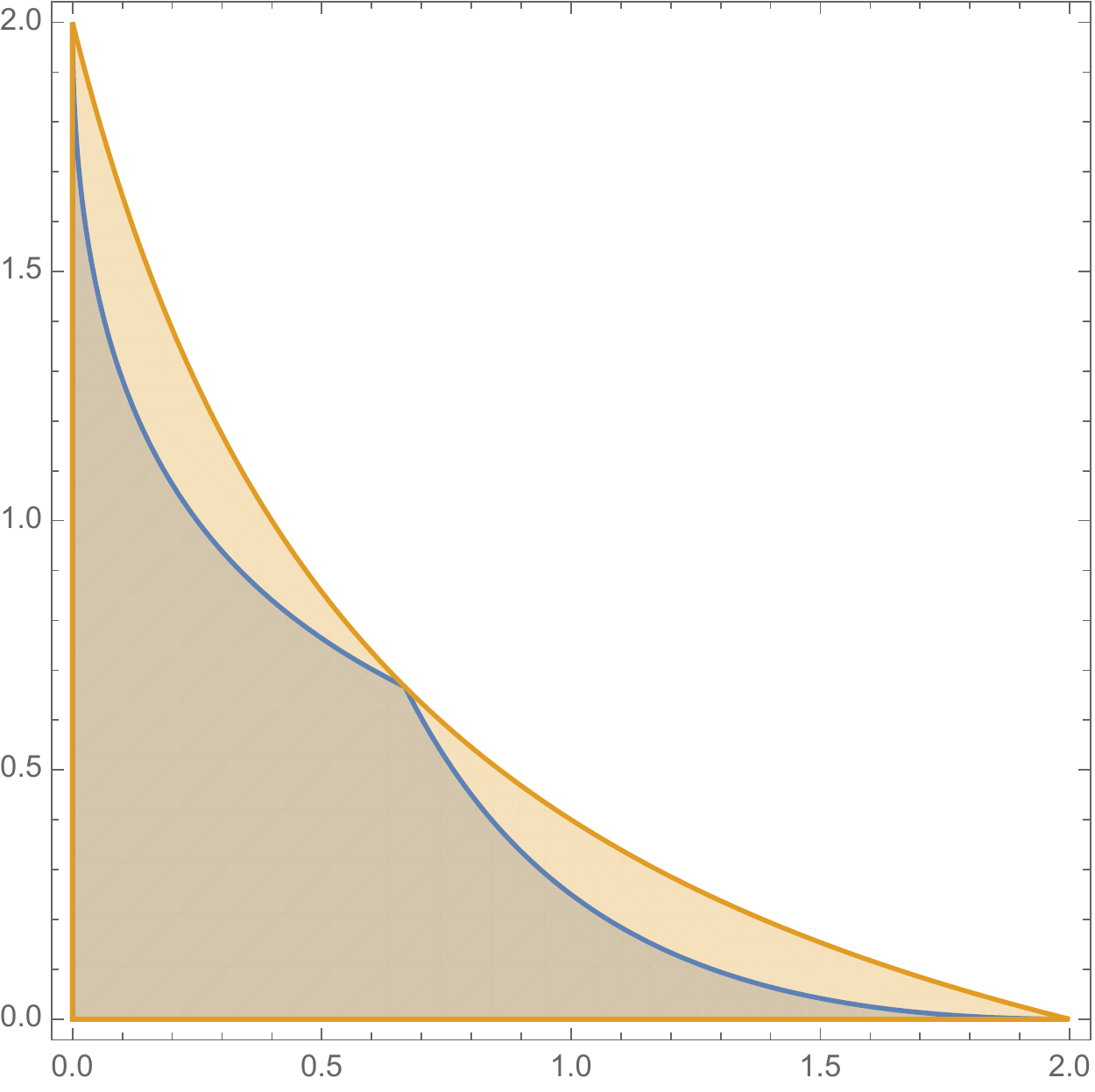}
			\includegraphics[width=0.29\textwidth]{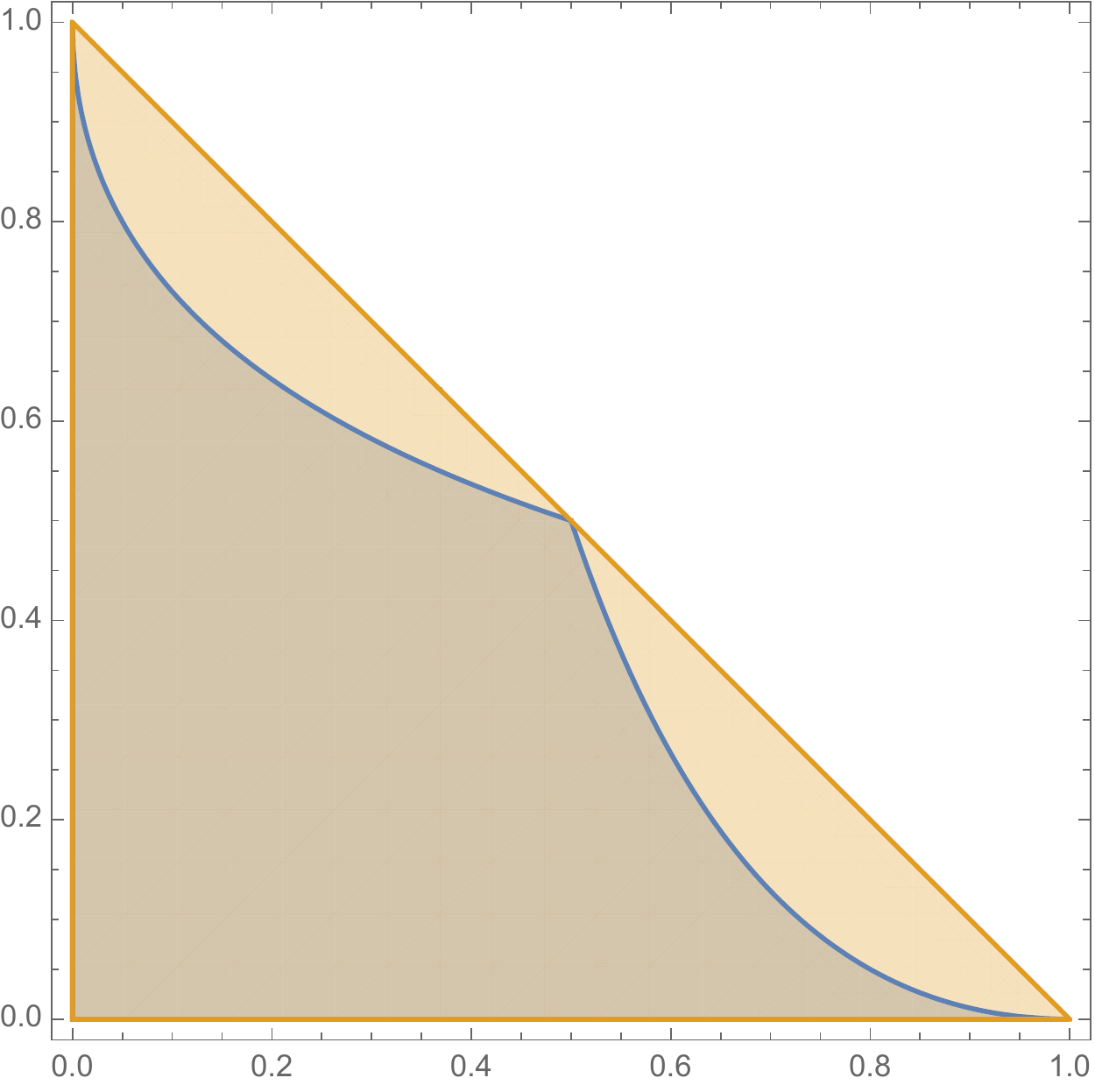}
			\includegraphics[width=0.29\textwidth]{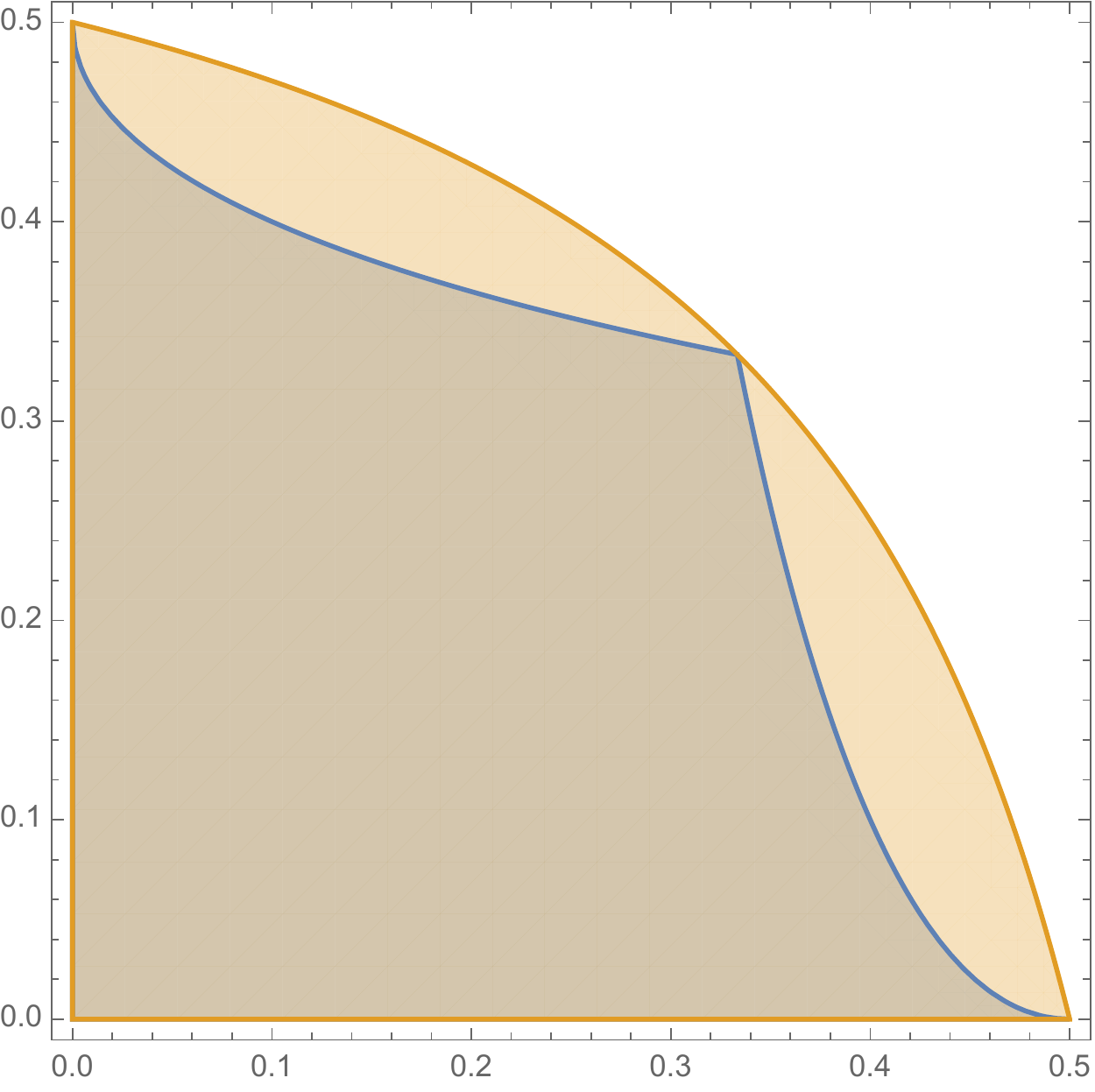}			
			\caption{We plot regions 
				$R_b$ in blue and $S_b$ in orange.
				Left: case $b=1$, Center: case $b=1/2$, Right: case 
				$b=1/4$. Note that in the case $\tau=\gamma$ the regions 
				coincide.}\label{fig:regions}
		\end{figure}
		\item It is not difficult to extend our method by replacing the averaged 
		quasi-nonexpansive operator $T$ by  
		$(\alpha_k)_{k\in\NN}-$averaged quasi-nonexpansive operators $(T_k)_{k\in\NN}$ 
		varying at each iteration 
		and satisfying $\sup_{k\in\NN}\alpha_k<1$.  Indeed, as in \cite{20.Opti04}, we have to 
		assume that $x^k-T_kx^k\to 
		0$ and $x^k\weakly x$ implies 
		$x\in\cap_{k\in\NN}\Fix T_k$, which is satisfied in several cases. In 
		particular, if we set, for every $k\in\NN$, $T_k:=J_{\gamma_kM}(\Id-\gamma_k N)$, 
		where $M\colon \HH\to 2^{\HH}$ is maximally monotone and $N\colon\HH\to\HH$ is 
		$\xi$-cocoercive, if $\gamma_k\in]0,2\xi[$, $T_k$ is $\gamma_k/2\xi-$cocoercive and 
		$\cap_{k\in\NN}\Fix T_k={\rm zer}(M+N)$. Therefore, our method using these operators
		leads to the common solution to ${\rm zer}(M+N)$ and ${\rm zer}(A+L^*\circ B\circ 
		L+C)$. Previous example can also be tackled by Theorem~\ref{thm:main} if we use 
		$\gamma_k\equiv\gamma$ and $T_k\equiv T:=J_{\gamma M}(\Id-\gamma N)$ and we 
		prefer to keep the constant operator case for avoiding additional hypotheses and for 
		the sake of simplicity.
		\item The method proposed in \cite{18.bot14} is an accelerated version of the method 
		proposed in 
		\cite{16.CombPes12} under the assumption that $A+C$ is 
		strongly monotone. Of course, this weaker assumption can also be used in our context,
		but we prefer to keep the statement of Theorem~\ref{thm:main} simpler.
		\item Theorem~\ref{thm:main}\eqref{thm:mainiii} generalizes the acceleration scheme 
		proposed in \cite{3.CP} to monotone inclusions 
		with a priori information. From this 
		results we derive an accelerated version of the methods in \cite{2.vu} in the 
		strongly monotone case when $T=\Id$ and $V=\GG$. These accelerated version, as far 
		as we know, have not been developed in the literature. 
		
		\item In the context of primal-dual problem \eqref{e:primal}-\eqref{e:dual}, 
		\eqref{e:alg2} 
		reduces to
		\begin{align}
			\label{e:algoopt}
			(\forall k\in\NN)\quad
			&\left\lfloor 
			\begin{array}{ll}
				\eta^{k+1}&=\prox_{\gamma_k g^*}(u^k+\gamma_k (L\bar{x}^k-\nabla\ell^*(u^k)))\\
				u^{k+1}&=P_V\,\eta^{k+1}\\
				p^{k+1}&=\prox_{\tau_k f}\left(x^k-\tau_k \left(L^*u^{k+1}+\nabla 
				h(x^k)\right)\right)\\
				x^{k+1}&=T\,p^{k+1}\\
				\bar{x}^{k+1}&=
				x^{k+1}+\theta_k(p^{k+1}-x^{k}),
			\end{array}
			\right.
		\end{align}
		and our conditions on the parameters coincide with 
		\cite{13.Cpock16,13.LorPock15}.
		Without strong convexity of $f$ and $g^*$, we deduce from 
		Theorem~\ref{thm:main}\eqref{thm:mainii} the weak convergence of the sequences 
		generated 
		by \eqref{e:algoopt}, generalizing results in \cite{1.condat,7.BAKS,13.LorPock15}. When 
		$f$ 
		or $g^*$
		is strongly convex, Theorem~\ref{thm:main}\eqref{thm:mainiii} yields an accelerated 
		and projected version of \cite{1.condat}. When $V=\GG$ and $T=\Id$, this result 
		complements the 
		ergodic convergence rates obtained in \cite{13.Cpock16} and generalizes \cite{3.CP}. 
		When 
		$\ell^*=0$, $V=\GG$, $T=\Id$ and $f$ and $g^*$ are strongly convex 
		Theorem~\ref{thm:main}\eqref{thm:mainiv} yields 
		non-ergodic linear convergence of \cite{1.condat}, complementing
		the ergodic linear convergence in \cite{13.Cpock16}.
		The advantage of the algorithm \eqref{e:algoopt} with respect to 
		\cite{1.condat,3.CP} is that primal-dual iterates of the former are forced to be in 
		$X\times 
		V$ when $T=P_X$. This feature leads to a faster 
		algorithm in the context of constrained convex optimization, by choosing $X$ to be
		some of the constraints. This can be observed in the particular instance developed 
		in \cite{7.BAKS} and in Section~\ref{sec6}, in which we provide some numerical 
		simulations.	
	\end{enumerate}
	
\end{remark}

\section{Application to constrained convex optimization}
\label{sec6}
In this section, we explore the advantages of the proposed method in constrained convex 
optimization.
\subsection{Constrained convex optimization problem}
\begin{problem}
	\label{prob:numeric}
	Let $f\in\Gamma_0(\RR^N)$, let $R$ and $S$ be $m\times N$ and $n\times N$ 
	real matrices, respectively, and let
	$c\in\RR^m$ and $d\in \RR^n$.  The problem is to
	\begin{align}
		\label{e:new}
		\min_{x\in\RR^N}&\,f(x)\quad 
		\text{s.t.}\quad  R x=c\quad Sx=d,
	\end{align}
	under the assumption that solutions exist.
\end{problem}	
Note that \eqref{e:new} can be written equivalently as
$
\min_{x\in\RR^N}f(x)+\iota_{\{b\}}( L x),
$
where $L\colon x\mapsto (Rx,Sx)$ and $b=(c,d)\in\RR^{m+n}$. Assume that $0\in{\rm 
	sri}(L(\dom f)-b)$.  Note that, since 
$\prox_{\gamma\iota_{\{b\}}^*}=\Id-\gamma b$ \cite[Proposition~24.8(ix)]{14.Livre1}, the 
method 
proposed in 
\cite[Algorithm~1]{3.CP} in this case reads: given $x^0=\bar{x}^0\in\HH$ and $u^0\in\GG$,
\begin{align}
	(\forall k\in\NN)\quad
	&\left\lfloor 
	\begin{array}{ll}
		u^{k+1}&=u^k+\gamma(L\bar{x}^k-b)\\
		x^{k+1}&=\prox_{\tau f}(x^k-\tau L^*u^{k+1})\\
		\bar{x}^{k+1}&=
		2x^{k+1}-x^{k},
	\end{array}
	\right.
	\label{e:pruebaCP}
\end{align}
where $\gamma\tau\|L\|^2<1$. The constraint is imposed via the Lagrange multiplier update 
in the first step of 
\eqref{e:pruebaCP}. This implies that the primal 
sequence $\{x^k\}_{k\in\NN}$ does not necessarily satisfy any of the constraints.
For ensuring feasibility, we should project onto $L^{-1}b$ by considering the problem 
$\min_{x\in \RR^N}f(x)+\iota_{L^{-1}b}(x)$. However, this is not always possible
since, in several applications, the matrices involved are singular or very bad conditioned
(see discussion in \cite{7.BAKS,13.CEMRACS}). If it is difficult to compute $P_{L^{-1}b}$ but 
we can project onto $R^{-1}c$, we can rewrite \eqref{e:new} as the problem of 
finding $\hat{x}\in R^{-1}c\cap\argmin_{x\in\RR^N}f(x)+\iota_{\{b\}}( L x),$
which is \eqref{e:primal} when $X=R^{-1}c$,
$h=\ell^*=0$, and $g=\iota_{\{b\}}$. Next corollary follows from 
Theorem~\ref{thm:main}, \eqref{e:algoopt} and $P_{X}\colon x\mapsto 
x-R^*(RR^*)^{-1}(Rx-c)$. 
\begin{corollary}
	\label{cor:algo1}
	Let $\gamma>0$ and $\tau>0$ be such that $\gamma\tau\| L\|^2<1$ and 
	let $(x^0,\bar{x}^0,u^0)\in\RR^N\times\RR^N\times\RR^{m+n}$ be 
	such that $x^0=\bar{x}^0$. Consider the routine 
	\begin{align}
		\label{e:prueba}
		(\forall k\in\NN)\quad 
		&\left\lfloor 
		\begin{array}{ll}
			u^{k+1}&=u^k+\gamma( L\bar{x}^k-b)\\
			p^{k+1}&=\prox_{\tau f}(x^k-\tau L^*u^{k+1})\\
			x^{k+1}&=p^{k+1}- R^*( R R^*)^{-1} (Rp^{k+1}-c)\\
			\bar{x}^{k+1}&=x^{k+1}+ p^{k+1}-x^{k}.
		\end{array}
		\right.
	\end{align}
	Then, there exist a solution $\hat{x}$ to Problem~\ref{prob:numeric} and 
	an 
	associated multiplier $\hat{u}$ such that $x^k\to\hat{x}$ 
	and
	$u^k\to\hat{u}$. 
\end{corollary}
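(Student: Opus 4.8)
The plan is to realize Problem~\ref{prob:numeric} as the particular instance of \eqref{e:primal}--\eqref{e:dual} (equivalently of Problem~\ref{prob:main}) already singled out in the discussion preceding the statement, and then to invoke Theorem~\ref{thm:main}\eqref{thm:mainii}. Concretely, I would take $\GG=\RR^{m+n}$, $V=\GG$, $L\colon x\mapsto(Rx,Sx)$ (which is the nonzero operator required in Problem~\ref{prob:main}), $b=(c,d)$, $X=R^{-1}c$, and choose $A=\partial f$, $B=\partial\iota_{\{b\}}$, $C=0$, $D=\partial\iota_{\{0\}}$, corresponding to $g=\iota_{\{b\}}$ and $h=\ell^*=0$. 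With these choices $\rho=\chi=0$ and $\beta=\delta=+\infty$; since $\RPX$ admits the value $+\infty$, all the hypotheses of Problem~\ref{prob:main} hold: $A$ is maximally monotone, $B=\partial\iota_{\{b\}}$ is monotone (i.e. $0$-cocoercive), $D^{-1}=0$ so $D$ is $(+\infty)$-strongly monotone, $C=0$ is $(+\infty)$-cocoercive, $\ran L\subset V$ trivially, and $T=P_X$ is firmly nonexpansive, hence $\tfrac12$-averaged quasi-nonexpansive, with $\Fix P_X=X=R^{-1}c\neq\varnothing$ because the feasible set of \eqref{e:new} is nonempty. The assumption $0\in\operatorname{sri}(L(\dom f)-b)$ guarantees, via Fenchel--Rockafellar duality, that Problem~\ref{prob:main} admits a solution in this instance.

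Next I would verify that \eqref{e:prueba} is exactly \eqref{e:algoopt} under this specialization. Since $V=\GG$ we have $P_V=\Id$, so the steps producing $\eta^{k+1}$ and $u^{k+1}$ collapse into one; using $\prox_{\gamma\iota_{\{b\}}^*}=\Id-\gamma b$ (as recalled before the statement) and $\nabla\ell^*=0$ yields $u^{k+1}=u^k+\gamma(L\bar{x}^k-b)$. With $\nabla h=0$ the primal step is $p^{k+1}=\prox_{\tau f}(x^k-\tau L^*u^{k+1})$, and with $T=P_X$ and the affine-projection formula $P_X\colon x\mapsto x-R^*(RR^*)^{-1}(Rx-c)$ (valid because $R$ has full row rank, so $RR^*$ is invertible, as used in the statement) we recover $x^{k+1}=p^{k+1}-R^*(RR^*)^{-1}(Rp^{k+1}-c)$; finally $\theta_k\equiv1$ gives $\bar{x}^{k+1}=x^{k+1}+p^{k+1}-x^k$. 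For $\beta=\delta=+\infty$, condition \eqref{e:parcond} with strict inequality reduces precisely to $\gamma\tau\|L\|^2<1$, which is the hypothesis of the corollary.

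I would then apply Theorem~\ref{thm:main}\eqref{thm:mainii} to obtain $x^k\weakly\hat{x}$ and $u^k\weakly\hat{u}$ for some solution $(\hat{x},\hat{u})$ of Problem~\ref{prob:main}; since $\HH=\RR^N$ and $\GG=\RR^{m+n}$ are finite dimensional, weak convergence is strong, so $x^k\to\hat{x}$ and $u^k\to\hat{u}$. It remains to translate $(\hat{x},\hat{u})$ into a solution of Problem~\ref{prob:numeric}. I would unfold the inclusion: because $D^{-1}=0$, one has $B\infconv D=B=\partial\iota_{\{b\}}$, so the primal--dual inclusions force $L\hat{x}=b$, i.e. $R\hat{x}=c$ and $S\hat{x}=d$ (feasibility), together with $-L^*\hat{u}\in\partial f(\hat{x})$. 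By Fermat's rule this is exactly the optimality system of \eqref{e:new}, whence $\hat{x}$ solves Problem~\ref{prob:numeric} and $\hat{u}$ is an associated multiplier.

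The routine verifications (the prox and projection formulas, the collapse of \eqref{e:parcond}) are immediate; the step that deserves care is this last translation, namely confirming that a zero of the monotone inclusion genuinely encodes both the hard constraint $L\hat{x}=b$ and the subdifferential condition $-L^*\hat{u}\in\partial f(\hat{x})$. The key observation is that, since $B\infconv D=\partial\iota_{\{b\}}$, the inclusion itself forces $L\hat{x}=b$ (hence both $R\hat{x}=c$ and $S\hat{x}=d$), so the a priori requirement $\hat{x}\in\Fix T=R^{-1}c$ is automatically consistent at any solution and \emph{no} constraint is lost by working with the selection $X=R^{-1}c$; the projection onto $X$ serves only to keep the iterates feasible for $Rx=c$ and to accelerate the method, not to alter the solution set.
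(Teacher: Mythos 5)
Your proposal is correct and takes essentially the same route as the paper: the paper also obtains the corollary by specializing Theorem~\ref{thm:main}\eqref{thm:mainii} (through \eqref{e:algoopt}) to the choices $X=R^{-1}c$, $g=\iota_{\{b\}}$, $h=\ell^*=0$ (so $\beta=\delta=+\infty$, $\rho=\chi=0$, $V=\GG$), using $\prox_{\gamma\iota_{\{b\}}^*}=\Id-\gamma b$ and $P_X\colon x\mapsto x-R^*(RR^*)^{-1}(Rx-c)$, with the condition $0\in\operatorname{sri}(L(\dom f)-b)$ and finite-dimensionality giving existence of a primal-dual solution and strong convergence. Your explicit verification of the hypotheses and of the translation of the monotone inclusion back into the optimality system of Problem~\ref{prob:numeric} merely fills in steps the paper leaves implicit.
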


\subsection{Numerical experiences}
\label{sec:numeric}
In this section we consider some particular instances of Problem~\ref{prob:numeric}.
We consider the case when $f =\|\cdot\|_1 \in \Gamma_{0}({\mathbb{R}}^{N})$, 
$N=1000$, $\tau=\frac{0.99}{\gamma {\left\|L\right\|}^{2}}$ and the relative error in 
\eqref{e:prueba} is $r_{k}=\sqrt{\frac{\left\|u^{k+1}-u^{k}\right\|^{2} + 
		\left\|x^{k+1}-x^{k}\right\|^{2}}{\left\|u^{k}\right\|^{2} + \left\|x^{k}\right\|^{2}}}$, for every 
$k\in 
\mathbb{N}$. We set $\gamma=10^{-2}$  and 
$(x^0,\bar{x}^0,u^0)=(0,0,0)\in\RR^N\times\RR^N\times\RR^{m+n}$ and, in each test we 
show
the average execution time and the number of average iterations of both 
methods, obtained by considering $20$ random realizations of matrices $R$, $S$ 
and vectors $c \in {\mathbb{R}}^{m}$ and $d\in {\mathbb{R}}^{n}$.
Here PCP and CP  denote the algorithms \eqref{e:prueba} and \eqref{e:pruebaCP}, 
respectively.

\textbf{Test 1.} In Problem~\ref{prob:numeric}, Table~\ref{table:1} 
show the efficiency of CP and PCP for the case $m=1$ and $n=100$. 
\begin{table}[h]
	\centering
	\begin{tabular}{|c|c|c|c|c|c|c|}
		\hline
		\multicolumn{1}{|c|}{\multirow{2}{*}{$m=1$, $n=100$}} & 
		\multicolumn{2}{c|}{\small$e=10^{-4}$} & 
		\multicolumn{2}{c|}{\small$e=5\cdot 10^{-5}$} & 
		\multicolumn{2}{c|}{\small$e=10^{-5}$} \\ \cline{2-7} 
		\multicolumn{1}{|c|}{}                            & iter      & time (s)     & iter      & time (s)      & 
		iter      & time (s)      \\ \hline
		\multicolumn{1}{|c|}{PCP} & \multicolumn{1}{c|}{$9265$} &  
		\multicolumn{1}{c|}{$22.28$}&
		\multicolumn{1}{|c|}{$14570$} &  
		\multicolumn{1}{c|}{$37.02$}& \multicolumn{1}{c|}{$46191$} &  
		\multicolumn{1}{c|}{$116.26$}\\ \hline
		\multicolumn{1}{|c|}{CP} & \multicolumn{1}{c|}{$9732$} &  
		\multicolumn{1}{c|}{$23.04$}& 
		\multicolumn{1}{|c|}{$15718$} &  
		\multicolumn{1}{c|}{$39.21$}& \multicolumn{1}{c|}{$50544$}&  
		\multicolumn{1}{c|}{$125.49$} \\ \hline
		\multicolumn{1}{|c|}{\%improv.} & \multicolumn{1}{c|}{$4.8$}&  
		\multicolumn{1}{c|}{$3.3$} & 
		\multicolumn{1}{|c|}{$7.3$} &  
		\multicolumn{1}{c|}{$5.6$}& \multicolumn{1}{c|}{$8.6$}&  
		\multicolumn{1}{c|}{$7.4$} \\ \hline
	\end{tabular}
	\caption{Average time and number of iterations when $m=1$ for obtaining $r_{k} 
		< e$.}
	\label{table:1}
\end{table}
We see that both algorithms are similar in terms of the execution time and the number of 
iterations, with a small advantage for the PCP algorithm. In addition, by decreasing the 
tolerance $e$, the percentage of improvement, computed as 
$100\cdot(x_{\rm CP}-x_{\rm PCP})/x_{\rm CP}$, slightly increases.

\textbf{Test 2.} In Problem~\ref{prob:numeric}, Table~\ref{table:2} 
show the efficiency of CP and PCP for the case $m=10$ and $n=100$. 
\begin{table}[h]
	\centering
	\caption{Average time and number of iterations when $m=10$ for obtaining  
		$r_{k} < e$.}
	\begin{tabular}{|c|c|c|c|c|c|c|}
		\hline
		\multicolumn{1}{|c|}{\multirow{2}{*}{$m=10$, $n=100$}} & 
		\multicolumn{2}{c|}{\small$e=10^{-4}$} & 
		\multicolumn{2}{c|}{\small$e=5\cdot 10^{-5}$} & 
		\multicolumn{2}{c|}{\small$e=10^{-5}$} \\ \cline{2-7} 
		\multicolumn{1}{|c|}{}                            & iter      & time (s)     & iter      & time (s)      & 
		iter      & time (s)      \\ \hline
		\multicolumn{1}{|c|}{PCP} & \multicolumn{1}{c|}{$6865$} &  
		\multicolumn{1}{c|}{$18.65$}&
		\multicolumn{1}{|c|}{$10229$} &  
		\multicolumn{1}{c|}{$27.86$}& \multicolumn{1}{c|}{$22855$} &  
		\multicolumn{1}{c|}{$65.05$}\\ \hline
		\multicolumn{1}{|c|}{CP} & \multicolumn{1}{c|}{$9280$} &  
		\multicolumn{1}{c|}{$23.72$}& 
		\multicolumn{1}{|c|}{$16033$} &  
		\multicolumn{1}{c|}{$39.13$}& \multicolumn{1}{c|}{$49526$}&  
		\multicolumn{1}{c|}{$129.78$} \\ \hline
		\multicolumn{1}{|c|}{\%improv.} & \multicolumn{1}{c|}{$26.0$}&  
		\multicolumn{1}{c|}{$21.4$} & 
		\multicolumn{1}{|c|}{$36.2$} &  
		\multicolumn{1}{c|}{$28.8$}& \multicolumn{1}{c|}{$53.9$}&  
		\multicolumn{1}{c|}{$49.9$} \\ \hline
	\end{tabular}
	\label{table:2}
\end{table}
In this case, there are clear differences between both algorithms and, as before, PCP is 
more efficient as tolerance decreases. In fact, when tolerance is $10^{-5}$, there is an 
improvement of approximately $50\%$ with respect to the CP in the execution time and the 
number of iterations is less than a half.

\textbf{Test 3.} Finally, in Problem~\ref{prob:numeric}, Table~\ref{table:3} 
show the efficiency of CP and PCP for the case $m=30$ and $n=100$. 
\begin{table}[h]
	\centering
	\caption{Average time and number of iterations when $m=30$ for obtaining  
		$r_{k} < e$.}
	\begin{tabular}{|c|c|c|c|c|c|c|}
		\hline
		\multicolumn{1}{|c|}{\multirow{2}{*}{$m=30$, $n=100$}} & 
		\multicolumn{2}{c|}{\small$e=10^{-4}$} & 
		\multicolumn{2}{c|}{\small$e=5\cdot 10^{-5}$} & 
		\multicolumn{2}{c|}{\small$e=10^{-5}$} \\ \cline{2-7} 
		\multicolumn{1}{|c|}{}                            & iter      & time (s)     & iter      & time (s)      & 
		iter      & time (s)      \\ \hline
		\multicolumn{1}{|c|}{PCP} & \multicolumn{1}{c|}{$5146$} &  
		\multicolumn{1}{c|}{$7.68$}&
		\multicolumn{1}{|c|}{$7143$} &  
		\multicolumn{1}{c|}{$10.67$}& \multicolumn{1}{c|}{$13421$} &  
		\multicolumn{1}{c|}{$19.70$}\\ \hline
		\multicolumn{1}{|c|}{CP} & \multicolumn{1}{c|}{$9941$} &  
		\multicolumn{1}{c|}{$12.93$}& 
		\multicolumn{1}{|c|}{$16438$} &  
		\multicolumn{1}{c|}{$21.37$}& \multicolumn{1}{c|}{$50841$}&  
		\multicolumn{1}{c|}{$64.23$} \\ \hline
		\multicolumn{1}{|c|}{\%improv.} & \multicolumn{1}{c|}{$48.2$}&  
		\multicolumn{1}{c|}{$40.6$} & 
		\multicolumn{1}{|c|}{$56.5$} &  
		\multicolumn{1}{c|}{$50.1$}& \multicolumn{1}{c|}{$73.6$}&  
		\multicolumn{1}{c|}{$69.3$} \\ \hline
	\end{tabular}
	\label{table:3}
\end{table}
We note that the improvement in execution times are considerably higher than in the 
previous cases. For example, in the case of $e = 10 ^{-4}$ the improvement increases by 
approximately $20\%$ with respect to the case $m = 10$ and by approximately $40\%$ in 
the case of $m = 1$. As in the previous cases, if we decrease the tolerance to $10^{-5}$, 
PCP has better efficiency reaching almost $70\%$ improvement with respect to CP. 
Table~\ref{table:7} summarizes the percentage of improvements for each test.
\begin{table}[h]
	\centering
	\caption{Comparison of improvement of average iterations and average times.}
	\begin{tabular}{|c|c|c|c|c|c|c|}
		\hline
		\multicolumn{1}{|c|}{\multirow{2}{*}{\% improv.}} & \multicolumn{2}{c|}{$m=1$} & 
		\multicolumn{2}{c|}{$m=10$} & \multicolumn{2}{c|}{$m=30$} \\ \cline{2-7} 
		\multicolumn{1}{|c|}{}                            & iter      & time (s)     & iter      & time (s)      & 
		iter      & time (s)      \\ \hline
		{\small $e=10^{-4}$}                                      & $4.8$ & $3.3$   & $26.0$       & 
		$21.4$             &  $48.2$         &  $40.6$                                 \\ \hline
		{\small $e=5 \cdot 10^{-5}$ }                                    &     $7.3$      &    $5.6$          &   
		$36.2$        &  $28.8$             &   $56.5$        &   $50.1$            \\ \hline
		{\small $e=10^{-5}$}                                       &  $8.6$         &   $7.4$           &  $53.9$         
		&  $49.9$             &   $73.6$        &     $69.3$          \\ \hline
	\end{tabular}
	\label{table:7}
\end{table}
We observe that for larger values of $m$ we obtain a better relative performance of PCP 
with respect to CP. The larger is $m$, the larger is the proportion of constraints on which we 
project.

\section{Conclusions}
\label{sec5}
In this paper we provide a projected primal-dual method for solving composite monotone 
inclusions with a priori information on solutions. 
We provide acceleration schemes in the presence of strong 
monotonicity and we derive linear convergence in the fully strongly monotone case.
The importance of the a priori information set is illustrated via a numerical example 
in convex optimization with equality constraints, in which the proposed method 
outperforms \cite{3.CP}. 

\textbf{Acknowledgements}
	The authors thank the ``Programa de financiamiento basal'' from CMM--Universidad de 
	Chile and the project DGIP-UTFSM PI-M-18.14 of Universidad T\'ecnica Federico Santa 
	Mar\'ia.


\begin{thebibliography}{}
	
	
	\bibitem{1.condat}
	Condat, L.: A primal-dual splitting method for convex optimization involving lipschitzian, 
	proximable and linear composite terms.
	J. Optim. Theory Appl. 158,460-479 (2013)
	
	\bibitem{2.vu}
	V\~u, B.C.: 
	A splitting algorithm for dual monotone inclusions involving cocoercive operators.
	Adv. Comput. Math. 38, 667-681 (2013)
	
	\bibitem{3.CP}
	Chambolle, A., Pock, T.:
	A first-order primal-dual algorithm for convex problems with applications to imaging.
	J. Math. Imaging Vis. 40, 120-145 (2011)
	
	\bibitem{4.Sicon1}
	Attouch, H., Brice\~no-Arias, L.M., Combettes, P.L.: A parallel splitting method for coupled
	monotone inclusions. SIAM J. Control Optim. 48, 3246-3270 (2010)
	
	\bibitem{5.Atto08}
	Attouch, H., Bolte, J., Redont, P., Soubeyran, A.: Alternating proximal algorithms for 
	weakly
	coupled convex minimization problems--applications to dynamical games and PDE's. J. 
	Convex Anal. 15, 485-506 (2008)
	
	\bibitem{6.Atto17}
	Attouch, H., Brice\~no-Arias, L.M., Combettes, P.L.:
	A strongly convergent primal-dual method for nonoverlapping domain 
	decomposition, Numer. Math. 133, 433-470 (2016)
	
	\bibitem{7.BAKS}
	Brice\~no-Arias, L.M., Kalise, D., Silva, F.J.:
	Proximal methods for stationary Mean Field Games with local
	couplings, SIAM J. Control Optim. 56(2), 801-836 (2018)
	
	\bibitem{8.Facc03}
	Facchinei, F., Pang, J.-S.: Finite-Dimensional Variational Inequalities and Complementarity
	Problems. Springer, New York (2003)
	
	\bibitem{9.Gabay83}
	Gabay, D.: Applications of the method of multipliers to variational inequalities. In: Fortin, 
	M.,
	Glowinski, R. (eds.): Augmented Lagrangian Methods: Applications to the Numerical 
	Solutionnof Boundary Value Problems, pp. 299--331. North-Holland, Amsterdam (1983)
	
	\bibitem{10.Mercier79}
	Mercier, B.: Topics in Finite Element Solution of Elliptic Problems (Lectures on 
	Mathematics,
	no. 63). Tata Institute of Fundamental Research, Bombay (1979)
	
	\bibitem{11.Tsen00} 
	Tseng, P.: A modified forward-backward splitting method for 
	maximal monotone mappings. SIAM J. Control Optim. 38, 431-446 (2000)
	
	\bibitem{12.Siopt2}
	Brice\~no-Arias, L.M., Davis, D.: Forward-backward-half forward algorithm for solving 
	monotone inclusions, SIAM J. Optim., in press (2018)
	
	\bibitem{13.Esser10}
	Esser, E.; Zhang, X.; Chan, T.F.:
	A general framework for a class of first order primal-dual algorithms for convex 
	optimization 
	in imaging science. 
	SIAM J. Imaging Sci. 3, 1015-1046 (2010)
	
	
	\bibitem{13.HeYuan}
	He, B., Yuan, X.: 
	Convergence analysis of primal-dual algorithms
	for a saddle-point problem: from contraction perspective. SIAM J.
	Imaging Sci. 5, 119-149 (2012)
	
	\bibitem{13.Cpock16}
	Chambolle, A.; Pock, T.:
	On the ergodic convergence rates of a first-order primal-dual algorithm. 
	Math. Program. 159, 253--287 (2016)
	
	\bibitem{13.LorPock15}
	Lorenz, D.A., Pock, T.:
	An inertial forward-backward algorithm for monotone inclusions. 
	J. Math. Imaging Vision 51, 311-325 (2015)
	
	\bibitem{13.CEMRACS}
	Brice\~no-Arias, L.M., Kalise, D., Kobeissi, Z., Lauri\`ere, M., Gonz\'alez, A.M., Silva, F.J.:
	On the implementation of a primal-dual algorithm for second order time-dependent mean 
	field games with local couplings
	(https://arxiv.org/abs/1802.07902)
	
	\bibitem{14.Livre1} 
	Bauschke, H.H., Combettes, P.L.:
	Convex Analysis and Monotone Operator Theory in Hilbert 
	Spaces, 2nd ed., Springer, New York (2017)
	
	\bibitem{15.Siopt1}
	Brice\~no-Arias, L.M., Combettes, P.L.:
	A monotone+skew splitting model for composite monotone inclusions in 
	duality. SIAM J. Optim. 21, 1230-1250 (2011)
	
	
	
	
	\bibitem{16.CombPes12}
	Combettes, P. L.; Pesquet, J.-C.:
	Primal-dual splitting algorithm for solving inclusions with mixtures of composite, 
	Lipschitzian, and parallel-sum type monotone operators.
	Set-Valued Var. Anal. 20, 307-330 (2012)
	
	\bibitem{17.Tsen00} 
	Tseng, P.: A modified forward-backward splitting method for 
	maximal monotone mappings. SIAM J. Control Optim. 38, 431-446 (2000)
	
	\bibitem{18.bot14}
	Bo\c t, R. I.; Hendrich, C.:
	Convergence analysis for a primal-dual monotone + skew splitting algorithm with 
	applications to total variation minimization.
	J. Math. Imaging Vision 49, 551-568 (2014)
	
	\bibitem{19.MaliP18}
	Malitsky, Y.; Pock, T.:
	A first-order primal-dual algorithm with linesearch. 
	SIAM J. Optim. 28, 411-432 (2018)
	
	\bibitem{20.Opti04}
	Combettes, P. L.:
	Solving monotone inclusions via compositions of nonexpansive averaged 
	operators. 
	Optimization 53, 475-504 (2004)
\end{thebibliography}
\end{document}